\documentclass[final]{siamltex}

\usepackage{epsfig,amssymb,latexsym}
\usepackage{amsfonts,psfrag,amsmath,bbm,color}
\usepackage{cancel}
\usepackage{comment}
\usepackage{mathrsfs}
\usepackage{graphicx}
\usepackage{textcomp}
\usepackage{multirow}
\usepackage{enumerate}
\usepackage{cancel}
\usepackage{algpseudocode}
\usepackage{caption}  
\usepackage{subcaption}
\usepackage{url}
\usepackage{rotating}
\usepackage{slashbox}
\usepackage{bigints}
\usepackage{hyperref}
\usepackage{xcolor}
\usepackage{ulem}

\usepackage{placeins}

\newcommand{\bchi}{\pmb{\chi}}
\def\grad{{\nabla}}
\usepackage[ruled,vlined]{algorithm2e}
\usepackage{geometry}
 \geometry{
 a4paper,
 total={170mm,257mm},
 left=26mm,
 top=30mm,
 right=26mm,
 bottom=30mm
 }

\hfuzz=20pt
\vfuzz=20pt
\hbadness=10000
\vbadness=\maxdimen
  
\newcommand{\bif}{\textbf{\textit{f}}}
\newcommand{\bg}{\textbf{\textit{g}}}

\newcommand{\footremember}[2]{%
    \footnote{#2}
    \newcounter{#1}
    \setcounter{#1}{\value{footnote}}%
}
\newcommand{\footrecall}[1]{%
    \footnotemark[\value{#1}]%
} 

\usepackage{tikz}
\usetikzlibrary{shapes.geometric, arrows}
\tikzstyle{startstop} = [rectangle, rounded corners, minimum width=1cm, minimum height=1cm,text centered, draw=black]
\tikzstyle{io} = [trapezium, trapezium left angle=70, trapezium right angle=110, minimum width=1cm, minimum height=1cm, text centered, draw=black, fill=blue!30]
\tikzstyle{method} = [rectangle, rounded corners, minimum width=1cm, minimum height =1cm, text centered, draw=black]
\tikzstyle{process} = [rectangle, minimum width=1cm, minimum height=1cm, text centered, draw=black]
\tikzstyle{decision} = [diamond, minimum width=0.5cm, minimum height=0.5cm, text centered, draw=black, fill=green!30]
\tikzstyle{arrow} = [thick,->,>=stealth]


\usepackage{amscd}

\graphicspath{{./figs/}}

\newcommand{\lp}{\left(}
\newcommand{\rp}{\right)}

\newtheorem{remark}{Remark}[section]

\def\PP{{{\rm l}\kern - .15em {\rm P} }}
\def\PN2{{\PP_{N}-\PP_{N-2}}}


\newcommand{\cD}{\mathcal{D}}

\newcommand{\bfeta}{\boldsymbol{\eta}}

\newcommand{\bphi}{\boldsymbol{\varphi}}

\newcommand{\bpsi}{\boldsymbol{\psi}}

\newcommand{\ba}{\boldsymbol{a}}

\newcommand{\bb}{\boldsymbol{b}}
\newcommand{\bB}{\boldsymbol{B}}

\newcommand{\be}{\boldsymbol{e}}

\newcommand{\bH}{\boldsymbol{H}}
\newcommand{\bl}{\boldsymbol{l}}
\newcommand{\bL}{\boldsymbol{L}}

\newcommand{\bu}{\boldsymbol{u}}

\newcommand{\bv}{\boldsymbol{v}}
\newcommand{\bV}{\boldsymbol{V}}

\newcommand{\bw}{\boldsymbol{w}}

\newcommand{\bx}{\boldsymbol{x}}
\newcommand{\bX}{\boldsymbol{X}}

\newcommand{\bz}{\boldsymbol{z}}






\newcommand{\deleted}[1]{{}}



\synctex=1

\begin{document}
\title{An efficient algorithm for simulating ensembles of parameterized MHD flow problems}

\author{
M. Mohebujjaman\footremember{mit}{D\MakeLowercase{epartment of} M\MakeLowercase{athematics and} P\MakeLowercase{hysics}, T\MakeLowercase{exas} A\&M I\MakeLowercase{nternational} U\MakeLowercase{niversity}, TX 78041, USA;}\footnote{C\MakeLowercase{orrespondence: m.mohebujjaman@tamiu.edu}}%
\and H. Wang\footrecall{mit}
\and L. Rebholz \footremember{clemson}{S\MakeLowercase{chool of} M\MakeLowercase{athematical and} S\MakeLowercase{tatistical} S\MakeLowercase{ciences}, C\MakeLowercase{lemson} U\MakeLowercase{niversity}, C\MakeLowercase{lemson}, SC 29634, USA;}
\and M. A. A. Mahbub\footremember{comilla}{D\MakeLowercase{epartment of} M\MakeLowercase{athematics}, C\MakeLowercase{omilla} U\MakeLowercase{niversity}, C\MakeLowercase{umilla} 3506, B\MakeLowercase{angladesh;}}
 }

\maketitle

\begin{abstract}
 In this paper, we propose, analyze, and test an efficient algorithm for computing ensemble average of incompressible magnetohydrodynamics (MHD) flows, where instances/members correspond to varying kinematic viscosity, magnetic diffusivity, body forces, and initial conditions. The algorithm is decoupled in Els\"asser variables and permits a shared coefficient matrix for all members at each time-step. Thus, the algorithm is much more computationally efficient than separately computing simulations for each member using usual MHD algorithms. We prove the proposed algorithm is unconditionally stable and convergent. Several numerical tests are given to support the predicted convergence rates. Finally, we test the proposed scheme and observe how the physical behavior changes as the coupling number increases in a lid-driven cavity problem with mean Reynolds number $Re\approx 15000$, and as the deviation of uncertainties in the initial and boundary conditions increases in a channel flow past a step problem.
\end{abstract}

{\bf Key words.} magnetohydrodynamics, uncertainty quantification, fast ensemble calculation, finite element method, Els\"asser variables

\medskip
{\bf Mathematics Subject Classifications (2000)}: 65M12, 65M22, 65M60, 76W05 

\pagestyle{myheadings}
\thispagestyle{plain}

\markboth{\MakeUppercase{An efficient algorithm for simulating ensembles of parameterized MHD Flow}}{\MakeUppercase{ M. Mohebujjaman, H. Wang, L. Rebholz and M. A. A. Mahbub}}

\section{Introduction}
In this work, we consider the following set of $J$ time-dependent, viscoresistive and incompressible dimensionless magnetohydrodynamics (MHD) equations \cite{B03, D01, LL60, MR17} for computing a MHD flow ensemble simulation of homogeneous Newtonian fluids:
\begin{eqnarray}
\bu_{j,t}+\bu_j\cdot\nabla \bu_j-s\bB_j\cdot\nabla \bB_j-\nu_j \Delta \bu_j+\nabla p_j &= & \bif_j(\bx,t), \hspace{2mm}\text{in}\hspace{2mm}\Omega \times (0,T], \label{gov1}\\
\bB_{j,t}+\bu_j\cdot\nabla \bB_j-\bB_j\cdot\nabla \bu_j-\nu_{m,j}\Delta \bB_j+\nabla\lambda_j &= & \nabla\times \bg_j(\bx,t),\hspace{2mm}\text{in}\hspace{2mm}\Omega \times (0,T],\label{gov3}\\
\nabla\cdot \bu_j & =& 0, \hspace{2mm}\text{in}\hspace{2mm}\Omega \times (0,T], \\
\nabla\cdot \bB_j &=& 0, \hspace{2mm}\text{in}\hspace{2mm}\Omega \times (0,T],\\ 
\bu_j(\bx,0)& =& \bu_j^0(\bx),\hspace{2mm}\text{in}\hspace{2mm}\Omega,\label{gov2}\\
\bB_j(\bx,0)& =& \bB_j^0(\bx),\hspace{2mm}\text{in}\hspace{2mm}\Omega,\label{gov5}
\end{eqnarray}
where $\bu_j$, $\bB_j$, $p_j$, and $\lambda_j$ denote the velocity, magnetic field, pressure, and artificial magnetic pressure solutions, respectively, for each $j=1,2,\cdots\hspace{-0.35mm},J$, corresponding to distinct combination of kinematic viscosity $\nu_j$, magnetic diffusivity $\nu_{m,j}$, body force $\bif_j$, $\nabla\times\bg_j$, and initial conditions $\bu_j^0$, $\bB_j^0$. The symbol $\Omega$ denotes the simulation domain (which we assume to be convex), $t$ the time variable, $\bx$ the spatial variable and $T$ the simulation time. The coupling number $s$ is the coefficient of the Lorentz force into the momentum equation \eqref{gov1}. For simplicity of our analysis, we consider homogeneous Dirichlet boundary conditions.

Input data, e.g., initial and boundary conditions, viscosities, and body forces have a significant effect on simulations of complex dynamical systems, but the involvement of uncertainty in their measurements reduces the accuracy of final solutions. For a robust and high fidelity solution, computation of ensemble average solution is popular in many applications such as surface data assimilation \cite{fujita2007surface}, magnetohydrodynamics \cite{jiang2018efficient}, porous media flow \cite{jiang2021artificial}, weather forecasting \cite{L05,LP08}, spectral methods \cite{LK10}, sensitivity analyses \cite{MX06}, and hydrology \cite{GG11}. Computing a quantity of interest by running a simulation subject to the ensemble average of a particular input data is not always the same as computing the ensemble average of the quantity of interest running the simulations for all different realizations of the input data first and then taking their average \cite{gunzburger2019efficient}.

Computing long-time simulations of a fully coupled MHD ensemble systems is computationally arduous and expensive. Therefore, decoupled algorithms which can reuse the global system matrix at each time-step for all $J$ realizations are computationally attractive. First-order time-stepping partitioned algorithms with small time-step restrictions are studied at low magnetic Reynolds number in a reduced MHD system in \cite{jiang2018efficient}. Decoupled, and unconditionally stable algorithm for the evolutionary full MHD ensemble system in Els\" asser variables are investigated in \cite{MR17}.

Viscosity parameters are the most important and sensitive input data, as they determine the flow characteristics. For example, as the Reynolds number $Re:=UL/\nu$ grows, the laminar flow moves into a convective dominated regime and eventually becomes turbulent \cite{zhang2017critical}. The situation is more complex in MHD flow with high magnetic Reynolds number $Re_m:=UL/\nu_{m}$.  \textcolor{black}{Here}, the contribution of the nonlinearity \textcolor{black}{dominates the flow's development and evolution.} Thus, for an accurate simulation, it \textcolor{black}{is} important to \textcolor{black}{accurately account for their}  uncertainties. The above mentioned MHD ensemble works \cite{jiang2018efficient, MR17} were done assuming uncertainties only on the initial and boundary conditions, and forcing functions; no uncertainties are considered on the viscosity coefficients. In this paper, we propose an algorithm for the MHD flow ensemble in which not only the initial and boundary data, and forcing functions, but also the kinematic viscosity and magnetic diffusivity parameters are different from one ensemble member to another.

Recent studies show \textcolor{black}{that}  instead of solving coupled MHD systems in \textcolor{black}{primitive} variables, using \textcolor{black}{instead} Els\"asser variables \textcolor{black}{can provide} a decoupled stable MHD simulation algorithm, \cite{AKMR15, HMR17, Mohebujjaman2021High, MR17,T14,wilson2015high}. Defining $\bv_j:=\bu_j+\sqrt{s} \bB_j$, $\bw_j:=\bu_j-\sqrt{s}\bB_j$, $\bif_{1,j}:=\bif_{j}+\sqrt{s}\nabla\times \bg_j$, $\bif_{2,j}:=\bif_j-\sqrt{s}\nabla\times \bg_j$, $q_j:=p_j+\sqrt{s}\lambda_j$ and $r_j:=p_j-\sqrt{s}\lambda_j$ produces the Els{\"{a}}sser variable formulation of the ensemble systems:
\begin{eqnarray}
\bv_{j,t}+\bw_j\cdot\nabla \bv_j-\frac{\nu_j+\nu_{m,j}}{2}\Delta \bv_j-\frac{\nu_j-\nu_{m,j}}{2}\Delta \bw_j+\nabla q_j=\bif_{1,j},\label{els1}\\
\bw_{j,t}+\bv_j\cdot\nabla \bw_j-\frac{\nu_j+\nu_{m,j}}{2}\Delta \bw_j-\frac{\nu_j-\nu_{m,j}}{2}\Delta \bv_j+\nabla r_j=\bif_{2,j},\label{els2}\\
\nabla\cdot \bv_j=\nabla\cdot \bw_j=0,\label{els3}
\end{eqnarray}
together with the initial and boundary conditions.

To reduce the immense computational cost for the above ensemble system, we propose a decoupled scheme together with the breakthrough idea \textcolor{black}{Jiang and Layton from} \cite{JL14}. Thus, we consider a uniform time-step size $\Delta t$ and let $t_n=n\Delta t$ for $n=0, 1, \cdots$, (suppress the spatial discretization momentarily), then computing the $J$ solutions independently, takes the following form:\\ 
Step 1: For $j=1,\cdots\hspace{-0.35mm},J$, 
\begin{align}
\frac{\bv_j^{n+1}}{\Delta t}+<\bw>^n\cdot\nabla \bv_j^{n+1}&-\frac{\Bar{\nu}+\Bar{\nu}_{m}}{2}\Delta \bv_j^{n+1}-\nabla\cdot\left(2\nu_T(\bw^{'},t^n)\nabla \bv_j^{n+1}\right)  +\nabla q_j^{n+1} \nonumber\\&= \bif_{1,j}(t^{n+1})+\frac{\bv_j^n}{\Delta t}-\bw_j^{'n}\cdot\nabla \bv_j^n+\frac{\nu_j^{'}+\nu_{m,j}^{'}}{2}\Delta \bv_j^{n}+\frac{\nu_j-\nu_{m,j}}{2}\Delta \bw_j^n,\label{scheme1}\\\nabla\cdot \bv_j^{n+1}&=0.\label{incom1}
\end{align}
Step 2: For $j=1,\cdots\hspace{-0.35mm},J$,
\begin{align}
\frac{\bw_j^{n+1}}{\Delta t}+<\bv>^n\cdot\nabla \bw_j^{n+1}&-\frac{\Bar{\nu}+\Bar{\nu}_{m}}{2}\Delta \bw_j^{n+1}-\nabla\cdot\left(2\nu_T(\bv^{'},t^n)\nabla \bw_j^{n+1}\right)  +\nabla r_j^{n+1} \nonumber\\&= \bif_{2,j}(t^{n+1})+\frac{\bw_j^n}{\Delta t}-\bv_j^{'n}\cdot\nabla \bw_j^n+\frac{\nu_j^{'}+\nu_{m,j}^{'}}{2}\Delta \bw_j^{n}+\frac{\nu_j-\nu_{m,j}}{2}\Delta \bv_j^n,\label{scheme2}\\\nabla\cdot \bw_j^{n+1}&=0.\label{incom2}
\end{align}
\textcolor{black}{Here,} $\bv_j^n,\bw_j^{n}, q_j^n$, and $r_j^{n}$ denote approximations of $\bv_j(\cdot,t^n),\bw_j(\cdot,t^n), q_j(\cdot,t^n)$, and $r_j(\cdot,t^n)$, respectively. The ensemble mean and fluctuation about the mean are defined as follows:
\begin{align*}
<\bz>^n:&=\frac{1}{J}\sum\limits_{j=1}^{J}\bz_j^n, \hspace{2mm} \bz_j^{'n}:=\bz_j^n-<\bz>^n,\\ \Bar{\nu}:&=\frac{1}{J}\sum\limits_{j=1}^{J}\nu_j,\hspace{4mm}\nu_j^{'}:=\nu_j-\Bar{\nu},\hspace{1mm}\text{and}\\ \Bar{\nu}_m:&=\frac{1}{J}\sum\limits_{j=1}^{J}\nu_{j,m},\hspace{2mm}\nu_{j,m}^{'}:=\nu_{j,m}-\Bar{\nu}_m.\label{ensemble_def}    
\end{align*}
The eddy viscosity term, which is $O(\Delta t)$, is defined using mixing length phenomenology, following \cite{JL15},  and is given by \textcolor{black}{$\nu_T(\bz^{'},t^n):=\mu\Delta t(l_z^{n})^2,$
where $\mu$ is a tuning parameter, $l_z^{n}=\max_j|\bz_j^{'n}|$ is a scalar quantity, and $|\cdot|$ denotes length of a vector.}

At each time-step, the above identical subproblems can be solved \textcolor{black}{\textit{simultaneously}} and \textcolor{black}{they each share the exact same} system matrix (which is independent of $j$). \textcolor{black}{Hence, to solve for the next time-step, one} solves the following system of equations \textcolor{black}{of the form} $A[\bx_1|\bx_2|\cdots|\bx_J]=[\bb_1|\bb_2|\cdots|\bb_J]$. Therefore, a massive \textcolor{black}{amount of} computer memory is saved and system matrix assembly \textcolor{black}{and} factorization\textcolor{black}{/preconditioner are needed} only once per time-step. Moreover, the algorithm can take advantage of block linear solvers \textcolor{black}{\cite{ju2020numerical}}. This idea in \cite{JL14} has been implemented for the solution of the heat equation with uncertain temperature-dependent conductivity \cite{Unconditionally2021Fiordilino}, Navier-Stokes simulations \cite{J15,jiang2017second,JL15,NTW16}, magnetohydrodynamics \cite{jiang2018efficient,MR17}, parameterized flow problems \cite{Gunzburger2019Second-Order, GJW17}, and turbulence modeling \cite{JKL15}. Using a finite element spatial discretization, we investigate the \textcolor{black}{proposed} decoupled ensemble scheme \textcolor{black}{\eqref{scheme1}-\eqref{incom2}} in a fully discrete setting. The efficient ensemble scheme is stable and convergent without any time-step restriction, \textcolor{black}{and handles,} uncertainties in all input data.  The \textcolor{black}{rest of the} paper is organized as follows: To follow a smooth analysis, we provide necessary notations and mathematical preliminaries in Section \ref{notation-prelims}. In Section \ref{fully-discrete-scheme}, we present and analyze a fully discrete and decoupled algorithm corresponding to \eqref{scheme1}-\eqref{incom2}, and prove stability and convergent theorems \textcolor{black}{for it}. To support the theoretical analysis, we compute the convergence rates, \textcolor{black}{check the} energy \textcolor{black}{stability} of the scheme, and test the scheme on benchmark problems in Section \ref{numerical-experiments}. Finally, conclusions and future research avenues are given in Section \ref{conclusion-future-works}.

\section{\large Notation and preliminaries}\label{notation-prelims}

Let $\Omega\subset \mathbb{R}^d\ (d=2,3)$ be a convex polygonal or polyhedral domain with boundary $\partial\Omega$. The usual $L^2(\Omega)$ norm and inner product are denoted by $\|.\|$ and $(.,.)$, respectively. Similarly, the $L^p(\Omega)$ norms and the Sobolev $W_p^k(\Omega)$ norms are $\|.\|_{L^p}$ and $\|.\|_{W_p^k}$, respectively for $k\in\mathbb{N},\hspace{1mm}1\le p\le \infty$. The Sobolev space $W_2^k(\Omega)$ is represented by $H^k(\Omega)$ with norm $\|.\|_k$. The vector-valued spaces are $$\bL^p(\Omega)=(L^p(\Omega))^d, \hspace{1mm}\text{and}\hspace{1mm}\bH^k(\Omega)=(H^k(\Omega))^d.$$
For $\bX$ being a normed function space in $\Omega$, $L^p(0,T;\bX)$ is the space of all functions defined on $(0,T]\times\Omega$ for which the following norm
\begin{align*}
    \|\bu\|_{L^p(0,T;\bX)}=\lp\int_0^T\|\bu\|_{\bX}^pdt\rp^\frac{1}{p},\hspace{2mm}p\in[1,\infty)
\end{align*}
is finite. For $p=\infty$, the usual modification is used in the definition of this space. The natural function spaces for our problem are
\begin{align*}
    \bX:&=\bH_0^1(\Omega)=\{\bv\in \bL^p(\Omega) :\nabla \bv\in L^2(\Omega)^{d\times d}, \bv=\textbf{0} \hspace{2mm} \mbox{on}\hspace{2mm}   \partial \Omega\},\\
    Q:&=L_0^2(\Omega)=\{ q\in L^2(\Omega): \int_\Omega q\hspace{1mm}d\bx=0\}.
\end{align*}
Recall the Poincar\'e inequality holds in $\bX$: There exists $C$ depending only on $\Omega$ satisfying for all $\bphi\in \bX$,
\[
\| \bphi \| \le C \| \nabla \bphi \|.
\]
The divergence-free velocity space is given by
$$\bV:=\{\bv\in \bX:(\nabla\cdot \bv, q)=0, \forall q\in Q\}.$$
We define the trilinear form $b:\bX\times \bX\times \bX\rightarrow \mathbb{R}$ by
 \[
 b(\bu,\bv,\bw):=(\bu\cdot\nabla \bv,\bw),
 \]
and recall from \cite{GR86} that $b(\bu,\bv,\bv)=0$ if $\bu\in \bV$, and 
\begin{align}
|b(\bu,\bv,\bw)|\leq C(\Omega)\|\nabla \bu\|\|\nabla \bv\|\|\nabla \bw\|,\hspace{2mm}\mbox{for any}\hspace{2mm}\bu,\bv,\bw\in \bX.\label{nonlinearbound}
\end{align}

The conforming finite element spaces are denoted by $\bX_h\subset \bX$ and  $Q_h\subset Q$, and we assume a regular triangulation $\tau_h(\Omega)$, where $h$ is the maximum triangle diameter.   We assume that $(\bX_h,Q_h)$ satisfies the usual discrete \textit{inf-sup} condition
\begin{align}
\inf_{q_h\in Q_h}\sup_{\bv_h\in \bX_h}\frac{(q_h,\grad\cdot \bv_h)}{\|q_h\|\|\grad \bv_h\|}\geq\beta>0,\label{infsup}
\end{align}
where $\beta$ is independent of $h$. The space of discretely divergence-free functions is defined as 
\begin{align*}
    \bV_h:=\{\bv_h\in \bX_h:(\nabla\cdot \bv_h,q_h)=0,\hspace{2mm}\forall q_h\in Q_h\}.
\end{align*}
For simplicity of our analysis, we will use \textcolor{black}{the} Scott-Vogelius (SV) finite element pair $(\bX_h, Q_h)=\lp(P_k)^d, P_{k-1}^{disc}\rp$,  which satisfies the \textit{inf-sup} condition \textcolor{black}{under certain conditions, such as} when the mesh is created as a barycenter refinement of a regular mesh and the polynomial degree $k\ge d$  \cite{arnold1992quadratic,Z05}. Our analysis can be extended without difficulty to any \textit{inf-sup} stable element choice, \textcolor{black}{although with minor additional technical detail.}

We have the following approximation properties in $(\bX_h,Q_h)$: \cite{BS08}
\begin{align}
\inf_{\bv_h\in \bX_h}\|\bu-\bv_h\|&\leq Ch^{k+1}|\bu|_{k+1},\hspace{2mm}\bu\in \bH^{k+1}(\Omega),\label{AppPro1}\\
 \inf_{\bv_h\in \bX_h}\|\grad (\bu-\bv_h)\|&\leq Ch^{k}|\bu|_{k+1},\hspace{5mm}\bu\in \bH^{k+1}(\Omega),\label{AppPro2}\\
\inf_{q_h\in Q_h}\|p-q_h\|&\leq Ch^k|p|_k,\hspace{10mm}p\in H^k(\Omega),
\end{align}
where $|\cdot|_r$ denotes the $H^r$ or $\bH^r$ seminorm.

We will assume the mesh is sufficiently regular for the inverse inequality to hold, and with this and the LBB assumption, we have approximation properties
\begin{align}
\| \nabla (\bu- P^{L^2}_{\bV_h}(\bu)  ) \|&\leq Ch^{k}|\bu|_{k+1},\hspace{2mm}\bu\in \bH^{k+1}(\Omega),\label{AppPro3}\\
 \inf_{\bv_h\in \bV_h}\|\grad (\bu-\bv_h)\|&\leq Ch^{k}|\bu|_{k+1},\hspace{2mm}\bu\in \bH^{k+1}(\Omega),\label{AppPro4}
\end{align}
where $P^{L^2}_{\bV_h}(\bu)$ is the $L^2$ projection of $\bu$ into $\bV_h$.

\textcolor{black}{The following lemma for the discrete Gr\"onwall inequality was given in \cite{HR90}.
\begin{lemma}\label{dgl}
Let $\Delta t$, $\cD$, $a_n$, $b_n$, $c_n$, $d_n$ be non-negative numbers for $n=1,\cdots\hspace{-0.35mm},M$ such that
    $$a_M+\Delta t \sum_{n=1}^Mb_n\leq \Delta t\sum_{n=1}^{M-1}{d_na_n}+\Delta t\sum_{n=1}^Mc_n+\cD\hspace{3mm}\mbox{for}\hspace{2mm}M\in\mathbb{N},$$
then for all $\Delta t> 0,$
$$a_M+\Delta t\sum_{n=1}^Mb_n\leq \mbox{exp}\left(\Delta t\sum_{n=1}^{M-1}d_n\right)\lp\Delta t\sum_{n=1}^Mc_n+\cD\rp\hspace{2mm}\mbox{for}\hspace{2mm}M\in\mathbb{N}.$$
\end{lemma}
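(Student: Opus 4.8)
The plan is to prove the conclusion by induction on $M$, which is natural since the hypothesis is assumed to hold for every $M\in\mathbb{N}$ simultaneously. First I would introduce the abbreviation $C_M:=\Delta t\sum_{n=1}^M c_n+\cD$ for the accumulated data on the right-hand side, and record the two structural facts I will lean on: $C_M$ is nondecreasing in $M$ (because each $c_n\ge 0$), and the constraining sum $\Delta t\sum_{n=1}^{M-1}d_na_n$ stops at index $M-1$, so no multiple of $a_M$ appears on the right. The base case $M=1$ is immediate: the empty sum vanishes, the hypothesis reduces to $a_1+\Delta t\, b_1\le C_1$, and the claimed bound reads $a_1+\Delta t\, b_1\le e^{0}C_1=C_1$.

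For the inductive step I would assume the conclusion for every $m\le M-1$; in particular this yields $a_n\le E_n C_n$ for each such $n$, where I write $E_m:=\exp(\Delta t\sum_{k=1}^{m-1}d_k)$. Substituting these bounds into the hypothesis at index $M$ and using $C_n\le C_M$ gives
\[
a_M+\Delta t\sum_{n=1}^{M}b_n\le \Delta t\sum_{n=1}^{M-1}d_n E_n C_n + C_M \le C_M\Bigl(1+\Delta t\sum_{n=1}^{M-1}d_n E_n\Bigr).
\]
It then remains only to show that the accumulated amplification factor collapses into the single exponential $E_M$, i.e.\ that $1+\Delta t\sum_{n=1}^{M-1}d_n E_n\le E_M$.

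This last estimate is where the real work sits, and I would prove it by a short secondary induction on the upper limit. Writing $\Phi_m:=1+\Delta t\sum_{n=1}^{m-1}d_n E_n$, one has $\Phi_1=1=E_1$, while the step $\Phi_{m+1}=\Phi_m+\Delta t\, d_m E_m\le E_m(1+\Delta t\, d_m)\le E_m e^{\Delta t\, d_m}=E_{m+1}$ follows from the inductive hypothesis together with the elementary inequality $1+x\le e^x$ applied at $x=\Delta t\, d_m$. Combining $\Phi_M\le E_M$ with the displayed bound closes the main induction. The one point I would emphasize is precisely why no time-step restriction (such as $\Delta t\, d_n<1$) is needed: because the hypothesis never places a term proportional to $a_M$ on its right-hand side, there is nothing to absorb back into the left, and the telescoping $1+x\le e^x$ argument proceeds unconditionally in $\Delta t$ — in contrast to the variant of this lemma whose constraining sum runs all the way to $M$.
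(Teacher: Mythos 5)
Your proof is correct. Note that the paper does not actually prove this lemma --- it simply quotes it from the Heywood--Rannacher reference [HR90] --- and your double induction, with the hypothesis at every index $M$, the monotonicity of $C_M$, and the telescoping use of $1+x\le e^{x}$, is precisely the standard argument behind that cited result; your closing remark correctly identifies why the absence of an $a_M$ term on the right-hand side is what makes the bound hold without any restriction on $\Delta t$.
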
}

\section{\large Fully discrete scheme and analysis}\label{fully-discrete-scheme}
Now we present and analyze an efficient, fully discrete, and decoupled time-stepping scheme for computing MHD flow ensembles. 
The scheme is defined below.

 \begin{algorithm}[H]\label{Algn1}
  \caption{Fully discrete and decoupled ensemble scheme} Given time-step $\Delta t>0$, end time $T>0$, initial conditions $\bv_j^0, \bw_j^0\in \bV_h$ and $\bif_{1,j}, \bif_{2,j}\in$ $ L^\infty\left( 0,T;\bH^{-1}(\Omega)\right)$ for $j=1,2,\cdots\hspace{-0.35mm},J$. Set $M=T/\Delta t$ and for $n=1,\cdots\hspace{-0.35mm},M-1$, compute:
 Find $\bv_{j,h}^{n+1}\in \bV_h$ satisfying, for all $\bchi_{j,h}\in \bV_h$:
 \begin{align}
\Bigg(\frac{\bv_{j,h}^{n+1}-\bv_{j,h}^n}{\Delta t}, &\bchi_{j,h}\Bigg)+b\left(<\bw_h>^n, \bv_{j,h}^{n+1},\bchi_{j,h}\right)+\frac{\Bar{\nu}+\Bar{\nu}_{m}}{2}\left(\nabla \bv_{j,h}^{n+1},\nabla \bchi_{j,h}\right)\nonumber\\&+\left(2\nu_T(\bw^{'}_{h},t^n)\nabla \bv_{j,h}^{n+1},\nabla \bchi_{j,h}\right)= \left(\bif_{1,j}(t^{n+1}),\bchi_{j,h}\right)-b\left(\bw_{j,h}^{'n}, \bv_{j,h}^n,\bchi_{j,h}\right)\nonumber\\&-\frac{\nu_j-\nu_{m,j}}{2}\left(\nabla \bw_{j,h}^n,\nabla \bchi_{j,h}\right)-\frac{\nu_j^{'}+\nu_{m,j}^{'}}{2}\left(\nabla \bv_{j,h}^{n},\nabla\bchi_{j,h}\right).\label{weaknew1}
\end{align}
Find $\bw_{j,h}^{n+1}\in \bV_h$ satisfying, for all $\bl_{j,h}\in \bV_h$:

\begin{align}
\Bigg(\frac{\bw_{j,h}^{n+1}-\bw_{j,h}^n}{\Delta t},\bl_{j,h}&\Bigg)+b\left(<\bv_h>^n, \bw_{j,h}^{n+1},\bl_{j,h}\right)+\frac{\Bar{\nu}+\Bar{\nu}_{m}}{2}\left(\nabla \bw_{j,h}^{n+1},\nabla \bl_{j,h}\right)\nonumber\\+&\left(2\nu_T(\bv^{'}_{h},t^n)\nabla \bw_{j,h}^{n+1},\nabla \bl_{j,h}\right)= \left(\bif_{2,j}(t^{n+1}),\bl_{j,h}\right)-b\left(\bv_{j,h}^{'n}, \bw_{j,h}^n,\bl_{j,h}\right)\nonumber\\-&\frac{\nu_j-\nu_{m,j}}{2}\left(\nabla \bv_{j,h}^n,\nabla \bl_{j,h}\right)-\frac{\nu_j^{'}+\nu_{m,j}^{'}}{2}\left(\nabla \bw_{j,h}^{n},\nabla \bl_{j,h}\right).\label{weaknew2}
\end{align}
\end{algorithm}

\subsection{Stability analysis}\label{stability-analysis}
We now prove stability and well-posedness for the Algorithm \ref{Algn1}. To simplify \textcolor{black}{the} notation, denote $\alpha_j:=\Bar{\nu}+\Bar{\nu}_{m}-|\nu_j-\nu_{m,j}|-|\nu_j^{'}+\nu_{m,j}^{'}|$, \textcolor{black}{for $j=1,2,\cdots\hspace{-0.35mm}, J$}.

\begin{theorem} Suppose $\textbf{f}_{1,j},\textbf{f}_{2,j}\in L^\infty\left(0,T;\bH^{-1}(\Omega)\right)$, and $\bv_{j,h}^0$, $\bw_{j,h}^0 \in \bH^1(\Omega)$, then the solutions to the Algorithm \ref{Algn1} are stable: For any $\Delta t>0$, if $\alpha_j>0$, and $\mu>\frac{1}{2}$
\begin{align*}
    \|\bv_{j,h}^M\|^2+\|\bw_{j,h}^M\|^2+\frac{\Bar{\nu}+\Bar{\nu}_{m}}{2}\Delta t\Big(\|\nabla \bv_{j,h}^{M}\|^2+\|\nabla \bw_{j,h}^{M}\|^2\Big)+\frac{\alpha_j\Delta t}{2}\sum_{n=0}^{M-1}\Big(\|\nabla \bv_{j,h}^n\|^2+\|\nabla \bw_{j,h}^n\|^2\Big)\nonumber\\\le\|\bv_{j,h}^0\|^2+\|\bw_{j,h}^0\|^2+\frac{\Bar{\nu}+\Bar{\nu}_{m}}{2}\Delta t\Big(\|\nabla \bv_{j,h}^{0}\|^2+\|\nabla \bw_{j,h}^{0}\|^2\Big)\nonumber\\+\frac{2\Delta t}{\alpha_j}\sum_{n=0}^{M-1}\Big(\|\textbf{f}_{1,j}(t^{n+1})\|_{-1}^2+\|\textbf{f}_{2,j}(t^{n+1})\|_{-1}^2\Big).
\end{align*}
\end{theorem}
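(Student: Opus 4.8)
The plan is to derive a discrete energy inequality by testing each subproblem with its own unknown and then summing over the time levels. Concretely, I would set $\bchi_{j,h}=\bv_{j,h}^{n+1}$ in \eqref{weaknew1} and $\bl_{j,h}=\bw_{j,h}^{n+1}$ in \eqref{weaknew2} and add the two identities. Three structural facts make the leading terms behave well. First, because we use the Scott--Vogelius pair the discretely divergence-free space satisfies $\bV_h\subset\bV$, so $<\bw_h>^n$ and $\bw_{j,h}^{'n}$ are genuinely divergence-free and the transported convection terms $b(<\bw_h>^n,\bv_{j,h}^{n+1},\bv_{j,h}^{n+1})$ and $b(<\bv_h>^n,\bw_{j,h}^{n+1},\bw_{j,h}^{n+1})$ vanish. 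Second, the polarization identity turns the time-difference term into $\frac{1}{2\Delta t}(\|\bv_{j,h}^{n+1}\|^2-\|\bv_{j,h}^n\|^2+\|\bv_{j,h}^{n+1}-\bv_{j,h}^n\|^2)$, supplying the telescoping kinetic energy plus a spare term $\|\bv_{j,h}^{n+1}-\bv_{j,h}^n\|^2$. Third, the principal diffusion term contributes $\frac{\Bar{\nu}+\Bar{\nu}_m}{2}\|\nabla\bv_{j,h}^{n+1}\|^2$ and the eddy-viscosity term contributes the nonnegative quantity $2(\nu_T(\bw^{'}_{h},t^n)\nabla\bv_{j,h}^{n+1},\nabla\bv_{j,h}^{n+1})$ on the left.

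The crux is the explicitly-evaluated fluctuation nonlinearity $b(\bw_{j,h}^{'n},\bv_{j,h}^n,\bv_{j,h}^{n+1})$, which must be controlled uniformly in $\Delta t$; a naive bound would force a CFL-type restriction. Here I would exploit the divergence-free structure once more: since $\bw_{j,h}^{'n}\in\bV$, skew-symmetry rewrites the term as $b(\bw_{j,h}^{'n},\bv_{j,h}^{n+1},\bv_{j,h}^{n+1}-\bv_{j,h}^n)$, moving the new time level into the gradient slot. I then use the pointwise bound $|\bw_{j,h}^{'n}|\le l_w^{n}:=\max_j|\bw_{j,h}^{'n}|$ with Cauchy--Schwarz to estimate it by $\|l_w^{n}\nabla\bv_{j,h}^{n+1}\|\,\|\bv_{j,h}^{n+1}-\bv_{j,h}^n\|$, and Young's inequality splits this into a multiple of $\|\bv_{j,h}^{n+1}-\bv_{j,h}^n\|^2$, cancelled by the spare polarization term, and a multiple of $(\Delta t)^2\|l_w^{n}\nabla\bv_{j,h}^{n+1}\|^2$. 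Because $\nu_T(\bw^{'}_{h},t^n)=\mu\Delta t\,(l_w^{n})^2$, this residual equals $\tfrac{\Delta t}{2\mu}(\nu_T(\bw^{'}_{h},t^n)\nabla\bv_{j,h}^{n+1},\nabla\bv_{j,h}^{n+1})$, which the left-hand eddy term $2\Delta t(\nu_T(\bw^{'}_{h},t^n)\nabla\bv_{j,h}^{n+1},\nabla\bv_{j,h}^{n+1})$ dominates once $\mu$ is sufficiently large; this balance is exactly where the hypothesis $\mu>\tfrac12$ enters, and I expect this uniform absorption of the explicit convection to be the main obstacle and the delicate point of the whole argument.

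The remaining right-hand-side terms are routine. The mixed-diffusion terms carrying $\nu_j-\nu_{m,j}$ and $\nu_j^{'}+\nu_{m,j}^{'}$ are handled by Cauchy--Schwarz and Young's inequality; after adding the $\bv$- and $\bw$-equations and splitting symmetrically between time levels $n$ and $n+1$, their $(n+1)$-parts reduce the principal diffusion coefficient while their $n$-parts remain on the right, and the combination is precisely what produces $\alpha_j=\Bar{\nu}+\Bar{\nu}_m-|\nu_j-\nu_{m,j}|-|\nu_j^{'}+\nu_{m,j}^{'}|$, with the hypothesis $\alpha_j>0$ guaranteeing a nonnegative dissipation coefficient. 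The forcing is bounded via the dual norm, $|(\bif_{1,j}(t^{n+1}),\bv_{j,h}^{n+1})|\le\|\bif_{1,j}(t^{n+1})\|_{-1}\|\nabla\bv_{j,h}^{n+1}\|$, with the gradient factor absorbed by the diffusion and the data factor landing on the right with weight $\tfrac{2}{\alpha_j}$. Finally, I would multiply through by $2\Delta t$ and sum from $n=0$ to $M-1$: the kinetic energies telescope to $\|\bv_{j,h}^M\|^2+\|\bw_{j,h}^M\|^2$, and the diffusion terms telescope so the interior gradient norms carry coefficient $\tfrac{\alpha_j}{2}$ and the endpoint carries $\tfrac{\Bar{\nu}+\Bar{\nu}_m}{2}$. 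Since every surviving left-hand term is nonnegative and no term of the form $d_n a_n$ survives on the right, no discrete Gr\"onwall inequality is needed, and the stated bound follows directly, which also reflects the absence of any exponential factor in the conclusion.
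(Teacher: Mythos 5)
Your proposal follows essentially the same route as the paper's proof: test with $\bv_{j,h}^{n+1}$ and $\bw_{j,h}^{n+1}$, use polarization, rewrite the explicit fluctuation convection by skew-symmetry and bound it pointwise by $l_w^n$ so it is absorbed into the spare polarization term and the eddy-viscosity term (the source of $\mu>\tfrac12$), generate $\alpha_j$ from the cross-diffusion terms via Young's inequality, and telescope without Gr\"onwall. The only blemish is a bookkeeping slip in the eddy-viscosity balance --- the tested term is $2(\nu_T\nabla\bv_{j,h}^{n+1},\nabla\bv_{j,h}^{n+1})=2\mu\Delta t\|l_w^n\nabla\bv_{j,h}^{n+1}\|^2$ and the Young residual is $\Delta t\|l_w^n\nabla\bv_{j,h}^{n+1}\|^2$ (not $O(\Delta t^2)$), so the comparison is $2\mu\Delta t$ versus $\Delta t$ --- but this does not affect the argument or the conclusion.
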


\begin{proof}
Choose $\bchi_{j,h}=\bv_{j,h}^{n+1}$ and $\bl_{j,h}=\bw_{j,h}^{n+1}$ in \eqref{weaknew1}-\eqref{weaknew2} \textcolor{black}{to obtain}
\begin{align}
\Bigg(\frac{\bv_{j,h}^{n+1}-\bv_{j,h}^n}{\Delta t},\bv_{j,h}^{n+1}\Bigg)+&\left(\bw_{j,h}^{'n}\cdot\nabla \bv_{j,h}^n,\bv_{j,h}^{n+1}\right)+\frac{\Bar{\nu}+\Bar{\nu}_{m}}{2}\|\nabla \bv_{j,h}^{n+1}\|^2+\lp2\nu_T(\bw^{'}_{h},t^n)\nabla \bv_{j,h}^{n+1},\nabla \bv_{j,h}^{n+1}\rp\nonumber\\= \Big(\bif_{1,j}(t^{n+1}),&\bv_{j,h}^{n+1}\Big)-\frac{\nu_j-\nu_{m,j}}{2}\lp\nabla \bw_{j,h}^n,\nabla \bv_{j,h}^{n+1}\rp-\frac{\nu_j^{'}+\nu_{m,j}^{'}}{2}\left(\nabla \bv_{j,h}^{n},\nabla \bv_{j,h}^{n+1}\right),\label{weakn1}
\end{align}
and
\begin{align}
\Bigg(\frac{\bw_{j,h}^{n+1}-\bw_{j,h}^n}{\Delta t},\bw_{j,h}^{n+1}\Bigg)+&\left(\bv_{j,h}^{'n}\cdot\nabla \bw_{j,h}^n,\bw_{j,h}^{n+1}\right)+\frac{\Bar{\nu}+\Bar{\nu}_{m}}{2}\|\nabla \bw_{j,h}^{n+1}\|^2+\lp2\nu_T(\bv^{'}_{h},t^n)\nabla \bw_{j,h}^{n+1},\nabla \bw_{j,h}^{n+1}\rp\nonumber\\ = \Big(\bif_{2,j}(t^{n+1}),&\bw_{j,h}^{n+1}\Big)-\frac{\nu_j-\nu_{m,j}}{2}\lp\nabla \bv_{j,h}^n,\nabla \bw_{j,h}^{n+1}\rp-\frac{\nu_j^{'}+\nu_{m,j}^{'}}{2}\left(\nabla \bw_{j,h}^{n},\nabla \bw_{j,h}^{n+1}\right).\label{weakn2}
\end{align}
Using the polarization identity and that $(2\nu_T(\bw^{'}_{h},t^n)\nabla \bv_{j,h}^{n+1},\nabla \bv_{j,h}^{n+1})=2\mu\Delta t\|l^{n}_{w,h}\nabla \bv_{j,h}^{n+1}\|^2$, we get 
\begin{align}
    \frac{1}{2\Delta t }&\left(\|\bv_{j,h}^{n+1}-\bv_{j,h}^n\|^2+\|\bv_{j,h}^{n+1}\|^2-\|\bv_{j,h}^{n}\|^2\right)+\left(\bw_{j,h}^{'n}\cdot\nabla \bv_{j,h}^n,\bv_{j,h}^{n+1}\right)\nonumber\\&+\frac{\Bar{\nu}+\Bar{\nu}_{m}}{2}\|\nabla \bv_{j,h}^{n+1}\|^2+2\mu\Delta t\|l^{n}_{w,h}\nabla \bv_{j,h}^{n+1}\|^2=\Big(\bif_{1,j}(t^{n+1}),\bv_{j,h}^{n+1}\Big)\nonumber\\&-\frac{\nu_j-\nu_{m,j}}{2}\lp\nabla \bw_{j,h}^n,\nabla \bv_{j,h}^{n+1}\rp-\frac{\nu_j^{'}+\nu_{m,j}^{'}}{2}\left(\nabla \bv_{j,h}^{n},\nabla \bv_{j,h}^{n+1}\right), \label{pol11}
\end{align}
and 
\begin{align}
    \frac{1}{2\Delta t }&\left(\|\bw_{j,h}^{n+1}-\bw_{j,h}^n\|^2+\|\bw_{j,h}^{n+1}\|^2-\|\bw_{j,h}^{n}\|^2\right)+\left(\bv_{j,h}^{'n}\cdot\nabla \bw_{j,h}^n,\bw_{j,h}^{n+1}\right)\nonumber\\&+\frac{\Bar{\nu}+\Bar{\nu}_m}{2}\|\nabla \bw_{j,h}^{n+1}\|^2+2\mu\Delta t\|l^{n}_{v,h}\nabla \bw_{j,h}^{n+1}\|^2=\Big(\bif_{2,j}(t^{n+1}),\bw_{j,h}^{n+1}\Big)\nonumber\\&-\frac{\nu_j-\nu_{m,j}}{2}\lp\nabla \bv_{j,h}^n,\nabla \bw_{j,h}^{n+1}\rp-\frac{\nu_j^{'}+\nu_{m,j}^{'}}{2}\left(\nabla \bw_{j,h}^{n},\nabla \bw_{j,h}^{n+1}\right).\label{pol22}
\end{align}
Adding \eqref{pol11} and \eqref{pol22}, using inequality $\|\ba\cdot\nabla\bb\|\le\||\ba|\nabla \bb\|$ in
\begin{align*}
    \left(\bw_{j,h}^{'n}\cdot\nabla \bv_{j,h}^n,\bv_{j,h}^{n+1}\right)&=-\left(\bw_{j,h}^{'n}\cdot\nabla \bv_{j,h}^{n+1}, \bv_{j,h}^n\right)\\&=\left(\bw_{j,h}^{'n}\cdot\nabla \bv_{j,h}^{n+1}, \bv_{j,h}^{n+1}-\bv_{j,h}^n\right)\\&\le\|\bw_{j,h}^{'n}\cdot\nabla \bv_{j,h}^{n+1}\|\hspace{1mm}\|\bv_{j,h}^{n+1}-\bv_{j,h}^n\|\\&\le\||\bw_{j,h}^{'n}|\nabla \bv_{j,h}^{n+1}\|\hspace{1mm}\|\bv_{j,h}^{n+1}-\bv_{j,h}^n\|\nonumber\\&\le\|l^{n}_{w,h}\nabla \bv_{j,h}^{n+1}\|\hspace{1mm}\|\bv_{j,h}^{n+1}-\bv_{j,h}^n\|,
\end{align*}
and \textcolor{black}{after applying the} Cauchy-Schwarz inequality, reduces to

\begin{align}
    \frac{1}{2\Delta t }\Big(&\|\bv_{j,h}^{n+1}\|^2-\|\bv_{j,h}^{n}\|^2+\|\bw_{j,h}^{n+1}\|^2-\|\bw_{j,h}^{n}\|^2+\|\bv_{j,h}^{n+1}-\bv_{j,h}^n\|^2+\|\bw_{j,h}^{n+1}-\bw_{j,h}^n\|^2\Big)\nonumber\\&+\frac{\Bar{\nu}+\Bar{\nu}_m}{2}\left(\|\nabla \bv_{j,h}^{n+1}\|^2+\|\nabla \bw_{j,h}^{n+1}\|^2\right)+2\mu\Delta t\left(\|l^{n}_{w,h}\nabla \bv_{j,h}^{n+1}\|^2+\|l^{n}_{v,h}\nabla \bw_{j,h}^{n+1}\|^2\right)\nonumber\\&\le\|l^{n}_{w,h}\nabla \bv_{j,h}^{n+1}\|\hspace{1mm}\|\bv_{j,h}^{n+1}-\bv_{j,h}^n\|+\|l^{n}_{v,h}\nabla \bw_{j,h}^{n+1}\|\hspace{1mm}\|\bw_{j,h}^{n+1}-\bw_{j,h}^n\|\nonumber\\&+\|\bif_{1,j}(t^{n+1})\|_{-1}\|\nabla \bv_{j,h}^{n+1}\|+\|\bif_{2,j}(t^{n+1})\|_{-1}\|\nabla \bw_{j,h}^{n+1}\|\nonumber\\&+\frac{|\nu_j^{'}+\nu_{m,j}^{'}|}{2}\Big(\|\nabla \bv_{j,h}^{n+1}\|\|\nabla \bv_{j,h}^{n}\|+\|\nabla \bw_{j,h}^{n}\|\|\nabla \bw_{j,h}^{n+1}\|\Big)\nonumber\\&+\frac{|\nu_j-\nu_{m,j}|}{2}\Big(\|\nabla \bv_{j,h}^{n+1}\|\|\nabla \bw_{j,h}^{n}\|+\|\nabla \bw_{j,h}^{n+1}\|\|\nabla \bv_{j,h}^{n}\|\Big).
\end{align}

Using Young's inequality and \textcolor{black}{reducing}, we have
\begin{align}
    \frac{1}{2\Delta t }\Big(&\|\bv_{j,h}^{n+1}\|^2-\|\bv_{j,h}^{n}\|^2+\|\bw_{j,h}^{n+1}\|^2-\|\bw_{j,h}^{n}\|^2\Big)+\frac{1}{4\Delta t}\Big(\|\bv_{j,h}^{n+1}-\bv_{j,h}^n\|^2+\|\bw_{j,h}^{n+1}-\bw_{j,h}^n\|^2\Big)\nonumber\\&+\frac{\Bar{\nu}+\Bar{\nu}_m}{4}\left(\|\nabla \bv_{j,h}^{n+1}\|^2+\|\nabla \bw_{j,h}^{n+1}\|^2\right)+(2\mu-1)\Delta t\left(\|l^{n}_{w,h}\nabla \bv_{j,h}^{n+1}\|^2+\|l^{n}_{v,h}\nabla \bw_{j,h}^{n+1}\|^2\right)\nonumber\\&\le\frac{|\nu_j-\nu_{m,j}|+|\nu_j^{'}+\nu_{m,j}^{'}|}{4}\Big(\|\nabla \bv_{j,h}^{n}\|^2+\|\nabla \bw_{j,h}^{n}\|^2\Big)\nonumber\\&+\frac{1}{\alpha_j}\Big(\|\bif_{1,j}(t^{n+1})\|_{-1}^2+\|\bif_{2,j}(t^{n+1})\|_{-1}^2\Big).
\end{align}
Assuming $\mu>\frac12$, and dropping non-negative terms from the left-\textcolor{black}{hand-side}, this reduces to

\begin{align}
    \frac{1}{2\Delta t }\Big(&\|\bv_{j,h}^{n+1}\|^2-\|\bv_{j,h}^{n}\|^2+\|\bw_{j,h}^{n+1}\|^2-\|\bw_{j,h}^{n}\|^2\Big)\nonumber\\&+\frac{\Bar{\nu}+\Bar{\nu}_m}{4}\left(\|\nabla \bv_{j,h}^{n+1}\|^2-\|\nabla \bv_{j,h}^{n}\|^2+\|\nabla \bw_{j,h}^{n+1}\|^2-\|\nabla \bw_{j,h}^{n}\|^2\right)\nonumber\\&+\frac{\alpha_j}{4}\big(\|\nabla \bv_{j,h}^n\|^2+\|\nabla \bw_{j,h}^n\|^2\big)\le\frac{1}{\alpha_j}\left(\|\bif_{1,j}(t^{n+1})\|_{-1}^2+\|\bif_{2,j}(t^{n+1})\|_{-1}^2\right).
\end{align}
Multiplying both sides by $2\Delta t$, and summing over time-steps $n=0,\cdots\hspace{-0.35mm},M-1$, completes the proof.

\end{proof}
\textcolor{black}{\begin{remark}
The Algorithm \ref{Algn1} is finite dimensional and linear at each time-step, thus the above stability implies the well-posedness of the scheme. Again, due to the linearity, the stability provides uniqueness, and uniqueness implies existence.
\end{remark}}
\subsection{Convergence}
\textcolor{black}{We now prove the convergence of the proposed decoupled and unconditionally stable Algorithm \ref{Algn1}, which converges in space and time, provided that the true solution is sufficiently smooth.}

\begin{theorem}\label{convergence-theorem}
Assume $\left(\bv_j, \bw_j, q_j, r_j\right)$ satisfying \eqref{els1}-\eqref{els3} with regularity assumptions $\bv_j, \bw_j\in L^\infty(0,T;\bH^{k+1}(\Omega))$, $\bv_{j,t}, \bw_{j,t}\in L^\infty(0,T;\bH^{2}(\Omega))$, $\bv_{j,tt}, \bw_{j,tt}\in L^\infty(0,T;\bL^{2}(\Omega))$ for $j=1,2,\cdots\hspace{-0.35mm},J$, then the ensemble average solution $(<\bv_h>,<\bw_h>)$ to the Algorithm \ref{Algn1} converges to the true ensemble average solution: For any $\Delta t>0$, if $\alpha_j>0$, and $\mu>\frac12$, one has
\begin{align}
    \|\hspace{-1mm}<\hspace{-1mm}\bv\hspace{-1mm}>(T)-<\hspace{-1mm}\bv_h\hspace{-1mm}>^M\hspace{-1mm}\|^2+\|\hspace{-1mm}<\hspace{-1mm}\bw\hspace{-1mm}>(T)-<\hspace{-1mm}\bw_h\hspace{-1mm}>^M\hspace{-1mm}\|^2+\frac{\alpha_j\Delta t}{2}\sum_{n=1}^{M}\Big(\|\nabla\big(\hspace{-1mm}<\hspace{-1mm} \bv\hspace{-1mm}>(t^{n})-<\hspace{-1mm}\bv_h\hspace{-1mm}>^n\big)\|^2\nonumber\\+\|\nabla(<\hspace{-1mm} \bw\hspace{-1mm}>(t^{n})-<\hspace{-1mm}\bw_h\hspace{-1mm}>^n)\|^2\Big)\le C \exp\left(\frac{CT}{\alpha_j}\left(1+\frac{\Delta t^2}{J}\right)\right)\Big(\Delta t^2+h^{2k}+h^{2k}\Delta t^2\nonumber\\+h^{2-d}\Delta t^2+ h^{2k-1}\Delta t+h^{2k+2}\Big).
\end{align}
\end{theorem}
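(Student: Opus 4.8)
The plan is to prove convergence via a standard energy argument for finite element error estimates, adapted to the ensemble setting. The central idea is to derive an error equation for each member $j$, test it against the discrete error, sum over members to get the ensemble-average error, and then close the estimate with the discrete Gr\"onwall inequality (Lemma~\ref{dgl}). The key structural feature I would exploit is that the eddy-viscosity stabilization and the explicit treatment of fluctuation terms were precisely what made the scheme stable; the convergence proof must show these same terms contribute only consistency errors of the advertised orders.

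\textbf{Setup and error decomposition.} First I would write the true solution $(\bv_j,\bw_j)$ at time $t^{n+1}$ as a solution of the continuous equations \eqref{els1}-\eqref{els3}, evaluated in weak form against $\bchi_{j,h}\in\bV_h$, and subtract the scheme \eqref{weaknew1}-\eqref{weaknew2}. This produces a truncation/consistency term from the backward-Euler time discretization (giving the $\Delta t^2$ contribution via Taylor expansion with $\bv_{j,tt},\bw_{j,tt}\in L^\infty(0,T;\bL^2)$) together with terms measuring the mismatch between continuous and discretized viscous/advective operators. I would split the error as
\begin{equation*}
\bv_j(t^n)-\bv_{j,h}^n = \big(\bv_j(t^n)-\widetilde{\bv}_j^n\big) + \big(\widetilde{\bv}_j^n-\bv_{j,h}^n\big) =: \boldsymbol{\eta}_j^n + \boldsymbol{\phi}_{j,h}^n,
\end{equation*}
where $\widetilde{\bv}_j^n := P^{L^2}_{\bV_h}(\bv_j(t^n))$ is the discretely divergence-free interpolant, so that the interpolation error $\boldsymbol{\eta}_j^n$ is controlled by the approximation properties \eqref{AppPro3}-\eqref{AppPro4}, and $\boldsymbol{\phi}_{j,h}^n\in\bV_h$ is the discrete error I can legally test against. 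Working in $\bV_h$ eliminates the pressure terms, which is why the SV/LBB framework was set up.

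\textbf{Energy estimate and term-by-term bounding.} I would test the $\bv$-error equation with $\boldsymbol{\phi}_{j,h}^{n+1}$ and the $\bw$-error equation with $\boldsymbol{\psi}_{j,h}^{n+1}$, apply the polarization identity to the time-difference terms exactly as in the stability proof, and then bound each right-hand-side term using the trilinear bound \eqref{nonlinearbound}, Cauchy-Schwarz, and Young's inequality. The nonlinear terms are the most delicate: the advection $b(\langle\bw\rangle,\cdot,\cdot)$ and the fluctuation-advection $b(\bw_j',\cdot,\cdot)$ must each be split into contributions involving $\boldsymbol{\eta}$ and $\boldsymbol{\phi}$, and the fluctuation terms are where the $\Delta t^2/J$ factor in the exponential and the $h^{2-d}\Delta t^2$ and $h^{2k-1}\Delta t$ terms arise — these come from bounding products of fluctuations using an inverse inequality (hence the $h^{2-d}$ scaling in dimension $d$) and from the explicit-in-time lagging of the fluctuation diffusion. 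The eddy-viscosity terms $2\nu_T\nabla\boldsymbol{\phi}_{j,h}^{n+1}$ are sign-definite on the left and, together with the condition $\mu>\tfrac12$, absorb the cross terms exactly as in stability; I would then critically use $\alpha_j>0$ to retain a coercive $\tfrac{\alpha_j}{4}\|\nabla\boldsymbol{\phi}_{j,h}^{n}\|^2$ term on the left after moving the $|\nu_j-\nu_{m,j}|$ and $|\nu_j'+\nu_{m,j}'|$ viscous-mismatch terms across.

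\textbf{Ensemble averaging, Gr\"onwall, and the main obstacle.} After summing the two equations, I would average over $j=1,\dots,J$ and recognize that $\langle\boldsymbol{\phi}_h\rangle^n$ is what appears in the convergence statement; the averaging is what converts certain fluctuation sums into the $1/J$ savings visible in the exponential factor. I would then sum over $n=0,\dots,M-1$, multiply by $2\Delta t$, and apply Lemma~\ref{dgl} with the $d_n$ coefficients coming from the $\|\nabla\boldsymbol{\phi}_{j,h}^n\|^2$ terms (controlled uniformly by the stability bound, which guarantees $d_n$ are bounded and hence the exponential is finite and independent of $\Delta t$). Finally, triangle inequality with the interpolation estimates \eqref{AppPro1}-\eqref{AppPro4} converts the $\boldsymbol{\phi}$-bound into the stated bound on the full error $\langle\bv\rangle(T)-\langle\bv_h\rangle^M$. \emph{The hardest part} I anticipate is the careful treatment of the nonlinear fluctuation terms: one must simultaneously (i) keep the eddy-viscosity terms positive to absorb the problematic $\|l_{w,h}^n\nabla\boldsymbol{\phi}_{j,h}^{n+1}\|$ contributions without a CFL restriction, and (ii) account honestly for the fluctuation products — using the inverse inequality to trade a derivative for an $h^{-d/2}$ factor — which is exactly what generates the unusual $h^{2-d}\Delta t^2$ term and forces the uncertainty in the viscosities to enter the error analysis in a nontrivial, member-dependent way not present in previous ensemble MHD work.
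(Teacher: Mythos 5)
Your proposal follows essentially the same route as the paper's proof: the same $L^2$-projection-into-$\bV_h$ error splitting, testing with $\bphi_{j,h}^{n+1}$ and $\bpsi_{j,h}^{n+1}$, polarization identity, absorption of the lagged nonlinear and viscous-mismatch terms via $\mu>\tfrac12$ and $\alpha_j>0$, inverse-inequality treatment of the fluctuation/eddy-viscosity terms producing the $h^{2-d}\Delta t^2$ and $h^{2k-1}\Delta t$ contributions, ensemble averaging giving the $\Delta t^2/J$ factor, discrete Gr\"onwall, and a final triangle inequality. The only minor imprecision is attributing the $h^{2k-1}\Delta t$ term to the lagged fluctuation diffusion, whereas in the paper it arises from the $\|l^{n}_{v,h}\nabla \bfeta^{n+1}\|^2$ stabilization-times-projection-error terms bounded via Agmon's and the inverse inequality; this does not affect the validity of the approach.
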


\begin{remark}
In 3D, the predicted temporal convergence rate could be reduced to $O(\Delta t(1+h^{-1/2}))$, which is less than the optimal rate  $O(\Delta t)$. This reduction happens due to the use of inverse inequality in the analysis of the stabilization term. It can be improved to $O(\Delta t)$ without using the stabilization term in the scheme, but that will incur a time-step restriction for the stability and convergence theorems.
\end{remark}

\begin{proof}
We start our proof by obtaining the error equations. Testing \eqref{els1} and \eqref{els2} with $\bchi_{j,h}, \bl_{j,h}\in \bV_h$ at the time level $t^{n+1}$, the continuous variational formulations can be written as
\begin{align}
\bigg(&\frac{\bv_j(t^{n+1})-\bv_j(t^n)}{\Delta t},\bchi_{j,h}\bigg)+\Big(\bw_j(t^{n+1})\cdot\nabla \bv_j(t^{n+1}),\bchi_{j,h}\Big)+\frac{\Bar{\nu}+\Bar{\nu}_{m}}{2}\Big(\nabla \bv_j(t^{n+1}), \nabla\bchi_{j,h}\Big) \nonumber\\ &=\Big(\bif_{1,j}(t^{n+1}),\bchi_{j,h}\Big)-\frac{\nu_j^{'}+\nu_{m,j}^{'}}{2}\Big(\nabla \bv_j(t^{n+1}), \nabla\bchi_{j,h}\Big)-\frac{\nu_j-\nu_{m,j}}{2}\Big(\nabla \bw_j(t^{n}),\nabla\bchi_{j,h}\Big)\nonumber\\&-\frac{\nu_j-\nu_{m,j}}{2}\Big(\nabla\big( \bw_j(t^{n+1})-\bw_j(t^{n})\big),\nabla\bchi_{j,h}\Big)-\bigg(\bv_{j,t}(t^{n+1})-\frac{\bv_j(t^{n+1})-\bv_j(t^n)}{\Delta t}, \bchi_{j,h}\bigg), \label{conweakn1}
\end{align}
and
\begin{align}
\bigg(&\frac{\bw_j(t^{n+1})-\bw_j(t^n)}{\Delta t},\bl_{j,h}\bigg)+\Big(\bv_j(t^{n+1})\cdot\nabla \bw_j(t^{n+1}),\bl_{j,h}\Big)+\frac{\Bar{\nu}+\Bar{\nu}_{m}}{2}\Big(\nabla \bw_j(t^{n+1}), \nabla \bl_{j,h}\Big) \nonumber\\ &=\Big(\bif_{2,j}(t^{n+1}),\bl_{j,h}\Big)-\frac{\nu_j^{'}+\nu_{m,j}^{'}}{2}\Big(\nabla \bw_j(t^{n+1}), \nabla \bl_{j,h}\Big)-\frac{\nu_j-\nu_{m,j}}{2}\Big(\nabla \bv_j(t^{n}),\nabla \bl_{j,h}\Big)\nonumber\\&-\frac{\nu_j-\nu_{m,j}}{2}\Big(\nabla\big( \bv_j(t^{n+1})-\bv_j(t^{n})\big),\nabla \bl_{j,h}\Big)-\bigg(\bw_{j,t}(t^{n+1})-\frac{\bw_j(t^{n+1})-\bw_j(t^n)}{\Delta t}, \bl_{j,h}\bigg). \label{conweakn2}
\end{align}

Denote $\be_{\bv,j}^n:=\bv_j(t^n)-\bv_{j,h}^n,\hspace{2mm}\be_{\bw,j}^n:=\bw_j(t^n)-\bw_{j,h}^n.$ Subtracting \eqref{weaknew1} and \eqref{weaknew2} from equation \eqref{conweakn1} and \eqref{conweakn2}, respectively, yields 
\begin{align}
\bigg(\frac{\be_{\bv,j}^{n+1}-\be_{\bv,j}^n}{\Delta t},\bchi_{j,h}\bigg)+\Big(\hspace{-1mm}<\hspace{-1mm}\be_{\bw}\hspace{-1mm}>^n\cdot\nabla\big(\bv_j(t^{n+1})-\bv_j(t^{n})\big),\bchi_{j,h}\Big)+\Big(\hspace{-1mm}<\hspace{-1mm}\bw_h\hspace{-1mm}>^n\cdot\nabla \be_{\bv,j}^{n+1},\bchi_{j,h}\Big)\nonumber\\+\Big(\bw_{j,h}^{'n}\cdot \nabla \be_{\bv,j}^n,\bchi_{j,h}\Big)+\Big(\be_{\bw,j}^n\cdot\nabla \bv_j(t^n),\bchi_{j,h}\Big)+\frac{\nu_j-\nu_{m,j}}{2}\Big(\nabla \be_{\bw,j}^n,\nabla\bchi_{j,h}\Big)\nonumber\\+\frac{\Bar{\nu}+\Bar{\nu}_{m}}{2}\Big(\nabla \be_{\bv,j}^{n+1},\nabla \bchi_{j,h}\Big)+\frac{\nu_j^{'}+\nu_{m,j}^{'}}{2}\Big(\nabla \be_{\bv,j}^n,\nabla\bchi_{j,h}\Big)-2\mu\Delta t\Big((l^{n}_{w,h})^2\nabla \bv_{j}(t^{n+1}),\nabla\bchi_{j,h}\Big)\nonumber\\+2\mu\Delta t\Big((l^{n}_{w,h})^2\nabla \be_{\bv,j}^{n+1},\nabla\bchi_{j,h}\Big)=-G_1(t,\bv_j,\bw_j,\bchi_{j,h}),
\end{align}
and
\begin{align}
\bigg(\frac{\be_{\bw,j}^{n+1}-\be_{\bw,j}^n}{\Delta t},\bl_{j,h}\bigg)+\Big(\hspace{-1mm}<\hspace{-1mm}\be_{\bv}\hspace{-1mm}>^n\cdot\nabla\big(\bw_j(t^{n+1})-\bw_j(t^{n})\big),\bl_{j,h}\Big)+\Big(\hspace{-1mm}<\hspace{-1mm}\bv_h\hspace{-1mm}>^n\cdot\nabla \be_{\bw,j}^{n+1},\bl_{j,h}\Big)\nonumber\\+\Big(\bv_{j,h}^{'n}\cdot \nabla \be_{\bw,j}^n,\bl_{j,h}\Big)+\Big(\be_{\bv,j}^n\cdot\nabla \bw_j(t^n),\bl_{j,h}\Big)+\frac{\nu_j-\nu_{m,j}}{2}\Big(\nabla \be_{\bv,j}^n,\nabla \bl_{j,h}\Big)\nonumber\\+\frac{\Bar{\nu}+\Bar{\nu}_{m}}{2}\Big(\nabla \be_{\bw,j}^{n+1},\nabla \bl_{j,h}\Big)+\frac{\nu_j^{'}+\nu_{m,j}^{'}}{2}\Big(\nabla \be_{\bw,j}^n,\nabla \bl_{j,h}\Big)-2\mu\Delta t\Big((l^{n}_{v,h})^2\nabla \bw_{j}(t^{n+1}),\nabla \bl_{j,h}\Big)\nonumber\\+2\mu\Delta t\Big((l^{n}_{v,h})^2\nabla \be_{\bw,j}^{n+1},\nabla \bl_{j,h}\Big)=-G_2(t,\bv_j,\bw_j,\bl_{j,h}),
\end{align}
where
\begin{align}
    G_1(t,\bv_j,&\bw_j,\bchi_{j,h}):=\bigg(\bv_{j,t}(t^{n+1})-\frac{\bv_j(t^{n+1})-\bv_j(t^n)}{\Delta t}, \bchi_{j,h}\bigg)+\frac{\nu_j^{'}+\nu_{m,j}^{'}}{2}\Big(\nabla \big(\bv_j(t^{n+1})-\bv_j(t^{n})\big),\nabla\bchi_{j,h}\Big)\nonumber\\&+\Big(\big(\bw_j(t^{n+1})-\bw_j(t^{n})\big)\cdot\nabla \bv_j(t^{n+1}),\bchi_{j,h}\Big)+\frac{\nu_j-\nu_{m,j}}{2}\Big(\nabla\big( \bw_j(t^{n+1})-\bw_j(t^{n})\big),\nabla\bchi_{j,h}\Big)\nonumber\\&+\Big(\big(\bw_j(t^n)-<\hspace{-1mm}\bw(t^n)\hspace{-1mm}>\hspace{-1mm}\big)\cdot\nabla\big(\bv_j(t^{n+1})-\bv_j(t^{n})\big),\bchi_{j,h}\Big),
\end{align}
and
\begin{align}
    G_2(t,\bv_j,&\bw_j,\bl_{j,h}):=\bigg(\bw_{j,t}(t^{n+1})-\frac{\bw_j(t^{n+1})-\bw_j(t^n)}{\Delta t}, \bl_{j,h}\bigg)+\frac{\nu_j^{'}+\nu_{m,j}^{'}}{2}\Big(\nabla \big(\bw_j(t^{n+1})-\bw_j(t^{n})\big),\nabla \bl_{j,h}\Big)\nonumber\\&+\Big(\big(\bv_j(t^{n+1})-\bv_j(t^{n})\big)\cdot\nabla \bw_j(t^{n+1}),\bl_{j,h}\Big)+\frac{\nu_j-\nu_{m,j}}{2}\Big(\nabla\big( \bv_j(t^{n+1})-\bv_j(t^{n})\big),\nabla \bl_{j,h}\Big)\nonumber\\&+\Big(\big(\bv_j(t^n)-<\hspace{-1mm}\bv(t^n)\hspace{-1mm}>\hspace{-1mm}\big)\cdot\nabla\big(\bw_j(t^{n+1})-\bw_j(t^{n})\big),\bl_{j,h}\Big).
\end{align}
Now we decompose the errors as
\begin{align*}
    \be_{\bv,j}^n:& = \bv_j(t^n)-\bv_{j,h}^n=(\bv_j(t^n)-\tilde{\bv}_j^n)-(\bv_{j,h}^n-\tilde{\bv}_j^n):=\bfeta_{\bv,j}^n-\bphi_{j,h}^n,\\
    \be_{\bw,j}^n: &= \bw_j(t^n)-\bw_{j,h}^n=(\bw_j(t^n)-\tilde{\bw}_j^n)-(\bw_{j,h}^n-\tilde{\bw}_j^n):=\eta_{\bw,j}^n-\bpsi_{j,h}^n,
\end{align*}
where $\tilde{\bv}_j^n: =P_{\bV_h}^{L^2}(\bv_j(t^n))\in \bV_h$ and $\tilde{\bw}_j^n: =P_{\bV_h}^{L^2}(\bw_j(t^n))\in \bV_h$ are the $L^2$ projections of $\bv_j(t^n)$ and $\bw_j(t^n)$ into $\bV_h$, respectively. Note that $(\bfeta_{\bv,j}^n,\bv_{j,h})=(\bfeta_{\bw,j}^n,\bv_{j,h})=0\hspace{2mm} \forall \bv_{j,h}\in \bV_h.$  Rewriting, we have for $\bchi_{j,h}, \bl_{j,h}\in \bV_h$

\begin{align}
\bigg(\frac{\bphi_{j,h}^{n+1}-\bphi_{j,h}^n}{\Delta t},\bchi_{j,h}\bigg)+\Big(\hspace{-1mm}<\hspace{-1mm}\bpsi_{h}\hspace{-1mm}>^n\cdot\nabla\big(\bv_j(t^{n+1})-\bv_j(t^{n})\big),\bchi_{j,h}\Big)+\Big(\hspace{-1mm}<\hspace{-1mm}\bw_h\hspace{-1mm}>^n\cdot\nabla \bphi_{j,h}^{n+1},\bchi_{j,h}\Big)\nonumber\\+\Big(\bw_{j,h}^{'n}\cdot \nabla\bphi_{j,h} ^n,\bchi_{j,h}\Big)+\Big(\bpsi_{j,h}^n\cdot\nabla \bv_j(t^n),\bchi_{j,h}\Big)+\frac{\nu_j-\nu_{m,j}}{2}\Big(\nabla\bpsi_{j,h}^n,\nabla\bchi_{j,h}\Big)\nonumber\\+\frac{\Bar{\nu}+\Bar{\nu}_{m}}{2}\Big(\nabla \bphi_{j,h}^{n+1},\nabla \bchi_{j,h}\Big)+\frac{\nu_j^{'}+\nu_{m,j}^{'}}{2}\Big(\nabla \bphi_{j,h}^n,\nabla\bchi_{j,h}\Big)+2\mu\Delta t\Big((l^{n}_{w,h})^2\nabla \bphi_{j,h}^{n+1},\nabla\bchi_{j,h}\Big)\nonumber\\=\Big(\hspace{-1mm}<\bfeta_{\bw}\hspace{-1mm}>^n\cdot\nabla\big(\bv_j(t^{n+1})-\bv_j(t^{n})\big),\bchi_{j,h}\Big)+\Big(\hspace{-1mm}<\hspace{-1mm}\bw_h\hspace{-1mm}>^n\cdot\nabla \bfeta_{\bv,j}^{n+1},\bchi_{j,h}\Big)+\Big(\bw_{j,h}^{'n}\cdot \nabla\bfeta_{v,j} ^n,\bchi_{j,h}\Big)\nonumber\\+\Big(\bfeta^n_{\bw,j}\cdot\nabla \bv_j(t^n),\bchi_{j,h}\Big)+\frac{\nu_j-\nu_{m,j}}{2}\Big(\nabla \bfeta_{\bw,j}^n,\nabla\bchi_{j,h}\Big)+\frac{\Bar{\nu}+\Bar{\nu}_{m}}{2}\Big(\nabla \bfeta_{\bv,j}^{n+1},\nabla \bchi_{j,h}\Big)\nonumber\\+\frac{\nu_j^{'}+\nu_{m,j}^{'}}{2}\Big(\nabla \bfeta_{\bv,j}^n,\nabla\bchi_{j,h}\Big)+2\mu\Delta t\Big((l^{n}_{w,h})^2\nabla \bv_{j}(t^{n+1}),\nabla\bchi_{j,h}\Big)\nonumber\\+2\mu\Delta t\Big((l^{n}_{w,h})^2\nabla \bfeta_{\bv,j}^{n+1},\nabla\bchi_{j,h}\Big)-G_1(t,\bv_j,\bw_j,\bchi_{j,h}),\label{phi2n}
\end{align}

and
\begin{align}
\bigg(\frac{\bpsi_{j,h}^{n+1}-\bpsi_{j,h}^n}{\Delta t},\bl_{j,h}\bigg)+\Big(\hspace{-1mm}<\hspace{-1mm}\bphi_{h}\hspace{-1mm}>^n\cdot\nabla\big(\bw_j(t^{n+1})-\bw_j(t^{n})\big),\bl_{j,h}\Big)+\Big(\hspace{-1mm}<\hspace{-1mm}\bv_h\hspace{-1mm}>^n\cdot\nabla \bpsi_{j,h}^{n+1},\bl_{j,h}\Big)\nonumber\\+\Big(\bv_{j,h}^{'n}\cdot \nabla\bpsi_{j,h} ^n,\bl_{j,h}\Big)+\Big(\bphi_{j,h}^n\cdot\nabla \bw_j(t^n),\bl_{j,h}\Big)+\frac{\nu_j-\nu_{m,j}}{2}\Big(\nabla\bphi_{j,h}^n,\nabla \bl_{j,h}\Big)\nonumber\\+\frac{\Bar{\nu}+\Bar{\nu}_{m}}{2}\Big(\nabla \bpsi_{j,h}^{n+1},\nabla \bl_{j,h}\Big)+\frac{\nu_j^{'}+\nu_{m,j}^{'}}{2}\Big(\nabla \bpsi_{j,h}^n,\nabla \bl_{j,h}\Big)+2\mu\Delta t\Big((l^{n}_{v,h})^2\nabla \bpsi_{j,h}^{n+1},\nabla \bl_{j,h}\Big)\nonumber\\=\Big(\hspace{-1mm}<\hspace{-1mm}\bfeta_{\bv}\hspace{-1mm}>^n\cdot\nabla\big(\bw_j(t^{n+1})-\bw_j(t^{n})\big),\bl_{j,h}\Big)+\Big(\hspace{-1mm}<\hspace{-1mm}\bv_h\hspace{-1mm}>^n\cdot\nabla \bfeta_{\bw,j}^{n+1},\bl_{j,h}\Big)+\Big(\bv_{j,h}^{'n}\cdot \nabla\bfeta_{\bw,j} ^n,\bl_{j,h}\Big)\nonumber\\+\Big(\bfeta^n_{\bv,j}\cdot\nabla \bw_j(t^n),\bl_{j,h}\Big)+\frac{\nu_j-\nu_{m,j}}{2}\Big(\nabla \bfeta_{\bv,j}^n,\nabla \bl_{j,h}\Big)+\frac{\Bar{\nu}+\Bar{\nu}_{m}}{2}\Big(\nabla \bfeta_{\bw,j}^{n+1},\nabla \bl_{j,h}\Big)\nonumber\\+\frac{\nu_j^{'}+\nu_{m,j}^{'}}{2}\Big(\nabla \bfeta_{\bw,j}^n,\nabla \bl_{j,h}\Big)+2\mu\Delta t\Big((l^{n}_{v,h})^2\nabla \bw_{j}(t^{n+1}),\nabla \bl_{j,h}\Big)\nonumber\\+2\mu\Delta t\Big((l^{n}_{v,h})^2\nabla \bfeta_{\bw,j}^{n+1},\nabla \bl_{j,h}\Big)-G_2(t,\bv_j,\bw_j,\bl_{j,h}).\label{psi2n}
\end{align}
Choose $\bchi_{j,h}=\bphi_{j,h}^{n+1}, \bl_{j,h}=\bpsi_{j,h}^{n+1}$, and use the polarization identity in \eqref{phi2n} and \eqref{psi2n}, to obtain
\begin{align}
    &\frac{1}{2\Delta t}\lp\|\bphi_{j,h}^{n+1}\|^2-\|\bphi_{j,h}^{n}\|^2+\|\bphi_{j,h}^{n+1}-\bphi_{j,h}^{n}\|^2\rp+\frac{\Bar{\nu}+\Bar{\nu}_{m}}{2}\|\nabla \bphi_{j,h}^{n+1}\|^2+2\mu\Delta t\|l^{n}_{w,h}\nabla \bphi_{j,h}^{n+1}\|^2\nonumber\\&\le\frac{|\nu_j-\nu_{m,j}|}{2}\left|\lp\nabla\bpsi_{j,h}^n,\nabla\bphi_{j,h}^{n+1}\rp\right|+\frac{|\nu_j^{'}+\nu_{m,j}^{'}|}{2}\left|\lp\nabla \bphi_{j,h}^n,\nabla\bphi_{j,h}^{n+1}\rp\right|+\frac{|\nu_j-\nu_{m,j}|}{2}\left|\lp\nabla \bfeta_{\bw,j}^n,\nabla\bphi_{j,h}^{n+1}\rp\right|\nonumber\\&+\frac{\Bar{\nu}+\Bar{\nu}_{m}}{2}\left|\lp\nabla \bfeta_{\bv,j}^{n+1},\nabla \bphi_{j,h}^{n+1}\rp\right|+\frac{|\nu_j^{'}+\nu_{m,j}^{'}|}{2}\left|\lp\nabla \bfeta_{\bv,j}^n,\nabla\bphi_{j,h}^{n+1}\rp\right|+\left|\lp\bw_{j,h}^{'n}\cdot \nabla\bphi_{j,h} ^n,\bphi_{j,h}^{n+1}\rp\right|\nonumber\\&+2\mu\Delta t\left|\lp(l^{n}_{w,h})^2\nabla \bv_{j}(t^{n+1}),\nabla\bphi_{j,h}^{n+1}\rp\right|+2\mu\Delta t\left|\lp(l^{n}_{w,h})^2\nabla \bfeta_{\bv,j}^{n+1},\nabla\bphi_{j,h}^{n+1}\rp\right|\nonumber\\&+
\left|\lp<\hspace{-1mm}\bpsi_{h}\hspace{-1mm}>^n\cdot\nabla\big(\bv_j(t^{n+1})-\bv_j(t^{n})\big),\bphi_{j,h}^{n+1}\rp\right|+\left|\lp\bpsi_{j,h}^n\cdot\nabla \bv_j(t^n),\bphi_{j,h}^{n+1}\rp\right|\nonumber\\&+\left|\lp<\hspace{-1mm}\bfeta_{\bw}\hspace{-1mm}>^n\cdot\nabla\big(\bv_j(t^{n+1})-\bv_j(t^{n})\big),\bphi_{j,h}^{n+1}\rp\right|+\left|\lp<\hspace{-1mm}\bw_h\hspace{-1mm}>^n\cdot\nabla \bfeta_{\bv,j}^{n+1},\bphi_{j,h}^{n+1}\rp\right|\nonumber\\&+\left|\lp\bw_{j,h}^{'n}\cdot \nabla\bfeta_{\bv,j} ^n,\bphi_{j,h}^{n+1}\rp\right|+\left|\lp\bfeta^n_{\bw,j}\cdot\nabla \bv_j(t^n),\bphi_{j,h}^{n+1}\rp\right|+\left|G_1(t,\bv_j,\bw_j,\bphi_{j,h}^{n+1})\right|,\label{phibd}
\end{align}
and
\begin{align}
    &\frac{1}{2\Delta t}\lp\|\bpsi_{j,h}^{n+1}\|^2-\|\bpsi_{j,h}^{n}\|^2+\|\bpsi_{j,h}^{n+1}-\bpsi_{j,h}^{n}\|^2\rp+\frac{\Bar{\nu}+\Bar{\nu}_{m}}{2}\|\nabla \bpsi_{j,h}^{n+1}\|^2+2\mu\Delta t\|l^{n}_{v,h}\nabla \bpsi_{j,h}^{n+1}\|^2\nonumber\\&\le\frac{|\nu_j-\nu_{m,j}|}{2}\left|\lp\nabla\bphi_{j,h}^n,\nabla\bpsi_{j,h}^{n+1}\rp\right|+\frac{|\nu_j^{'}+\nu_{m,j}^{'}|}{2}\left|\left(\nabla \bpsi_{j,h}^n,\nabla\bpsi_{j,h}^{n+1}\right)\right|+\frac{|\nu_j-\nu_{m,j}|}{2}\left|\lp\nabla \bfeta_{\bv,j}^n,\nabla\bpsi_{j,h}^{n+1}\rp\right|\nonumber\\&+\frac{\Bar{\nu}+\Bar{\nu}_{m}}{2}\left|\lp\nabla \bfeta_{\bw,j}^{n+1},\nabla \bpsi_{j,h}^{n+1}\rp\right|+\frac{|\nu_j^{'}+\nu_{m,j}^{'}|}{2}\left|\left(\nabla \bfeta_{\bw,j}^n,\nabla\bpsi_{j,h}^{n+1}\right)\right|+\left|\lp \bv_{j,h}^{'n}\cdot \nabla\bpsi_{j,h} ^n,\bpsi_{j,h}^{n+1}\rp\right|\nonumber\\&+2\mu\Delta t\left|\lp(l^{n}_{v,h})^2\nabla \bw_{j}(t^{n+1}),\nabla\bpsi_{j,h}^{n+1}\rp\right|+2\mu\Delta t\left|\lp(l^{n}_{v,h})^2\nabla \bfeta_{\bw,j}^{n+1},\nabla\bpsi_{j,h}^{n+1}\rp\right|\nonumber\\&+
\left|\lp<\hspace{-1mm}\bphi_{h}\hspace{-1mm}>^n\cdot\nabla\big(\bw_j(t^{n+1})-\bw_j(t^{n})\big),\bpsi_{j,h}^{n+1}\rp\right|+\left|\lp\bphi_{j,h}^n\cdot\nabla \bw_j(t^n),\bpsi_{j,h}^{n+1}\rp\right|\nonumber\\&+\left|\lp<\hspace{-1mm}\bfeta_{\bv}\hspace{-1mm}>^n\cdot\nabla\big(\bw_j(t^{n+1})-\bw_j(t^{n})\big),\bpsi_{j,h}^{n+1}\rp\right|+\left|\lp<\hspace{-1mm}\bv_h\hspace{-1mm}>^n\cdot\nabla \bfeta_{\bw,j}^{n+1},\bpsi_{j,h}^{n+1}\rp\right|\nonumber\\&+\left|\lp\bv_{j,h}^{'n}\cdot \nabla\bfeta_{\bw,j} ^n,\bpsi_{j,h}^{n+1}\rp\right|+\left|\lp\bfeta^n_{\bv,j}\cdot\nabla \bw_j(t^n),\bpsi_{j,h}^{n+1}\rp\right|+\left|G_2(t,\bv_j,\bw_j,\bpsi_{j,h}^{n+1})\right|.\label{psibd}
\end{align}
Now, turn our attention to finding bounds on
the right side terms of \eqref{phibd} (the estimates on terms in \eqref{psibd} are similar). Applying Cauchy-Schwarz 
Young’s inequalities on the first five terms results in
\begin{align*}
    \frac{|\nu_j-\nu_{m,j}|}{2}\left|\lp\nabla\bpsi_{j,h}^n,\nabla\bphi_{j,h}^{n+1}\rp\right|&\le\frac{|\nu_j-\nu_{m,j}|}{4}\left(\|\nabla \bphi_{j,h}^{n+1}\|^2+\|\nabla\bpsi_{j,h}^n\|^2\right),\\\frac{|\nu_j^{'}+\nu_{m,j}^{'}|}{2}\left|\left(\nabla \bphi_{j,h}^n,\nabla\bphi_{j,h}^{n+1}\right)\right|&\le\frac{|\nu_j^{'}+\nu_{m,j}^{'}|}{4}\left(\|\nabla \bphi_{j,h}^{n+1}\|^2+\|\nabla\bphi_{j,h}^n\|^2\right),\\\frac{|\nu_j-\nu_{m,j}|}{2}\left|\lp\nabla \bfeta_{\bw,j}^n,\nabla\bphi_{j,h}^{n+1}\rp\right|&\le\frac{\alpha_j}{44}\|\nabla \bphi_{j,h}^{n+1}\|^2+\frac{11(\nu_j-\nu_{m,j})^2}{4\alpha_j}\|\nabla \bfeta_{\bw,j}^n\|^2,\nonumber\\\frac{\Bar{\nu}+\Bar{\nu}_{m}}{2}\left|\lp\nabla \bfeta_{\bv,j}^{n+1},\nabla \bphi_{j,h}^{n+1}\rp\right|&\le \frac{\alpha_j}{44}\|\nabla \bphi_{j,h}^{n+1}\|^2+\frac{11(\Bar{\nu}+\Bar{\nu}_{m})^2}{4\alpha_j}\|\nabla \bfeta_{\bv,j}^{n+1}\|^2,\\\frac{|\nu_j^{'}+\nu_{m,j}^{'}|}{2}\left|\left(\nabla \bfeta_{\bv,j}^n,\nabla\bphi_{j,h}^{n+1}\right)\right|&\le\frac{\alpha_j}{44}\|\nabla \bphi_{j,h}^{n+1}\|^2+\frac{11(\nu_j^{'}+\nu_{m,j}^{'})^2}{4\alpha_j}\|\nabla \bfeta_{\bv,j}^n\|^2.
\end{align*}
For the first nonlinear term, rearranging and applying Cauchy-Schwarz and Young’s inequalities
yields
\begin{align*}
    \left|\lp \bw_{j,h}^{'n}\cdot \nabla\bphi_{j,h} ^n,\bphi_{j,h}^{n+1}\rp\right|=\left|-\lp \bw_{j,h}^{'n}\cdot \nabla\bphi_{j,h} ^{n+1},\bphi_{j,h}^{n}\rp\right|&=\left|\lp \bw_{j,h}^{'n}\cdot \nabla\bphi_{j,h} ^{n+1},\bphi_{j,h} ^{n+1}-\bphi_{j,h}^{n}\rp\right|\\&\le\|\bw_{j,h}^{'n}\cdot \nabla\bphi_{j,h} ^{n+1}\|\|\bphi_{j,h} ^{n+1}-\bphi_{j,h}^{n}\|\\&\le\|l^{n}_{w,h} \nabla\bphi_{j,h} ^{n+1}\|\|\bphi_{j,h} ^{n+1}-\bphi_{j,h}^{n}\|\\&\le\frac{1}{4\Delta t}\|\bphi_{j,h}^{n+1}-\bphi_{j,h}^{n}\|^2+\Delta t\|l^{n}_{w,h} \nabla\bphi_{j,h} ^{n+1}\|^2.
\end{align*}
For the second nonlinear term, we apply H\"older's inequality and the regularity assumptions of the true
solution to get
\begin{align*}
    2\mu\Delta t\left|\lp(l^{n}_{w,h})^2\nabla \bv_{j}(t^{n+1}),\nabla\bphi_{j,h}^{n+1}\rp\right|&\le C\mu\Delta t\|\nabla \bv_{j}(t^{n+1})\|_{L^\infty} \|l^{n}_{w,h}\|_{L^4}^2\|\nabla\bphi_{j,h}^{n+1}\|\\&\le\frac{\alpha_j}{44}\|\nabla \bphi_{j,h}^{n+1}\|^2+C\frac{\mu^2\Delta t^2}{\alpha_j}\|l^{n}_{w,h}\|_{L^4}^4.
\end{align*}
For the third nonlinear term, we rearrange, and apply Cauchy-Schwarz and Young’s inequalities assuming $\mu>1/2$
 to obtain
\begin{align*}
    2\mu\Delta t\left|\lp (l^{n}_{w,h})^2\nabla \bfeta_{\bv,j}^{n+1},\nabla\bphi_{j,h}^{n+1}\rp\right|&= 2\mu\Delta t\left(l^{n}_{w,h}\nabla \bfeta_{\bv,j}^{n+1},l^{n}_{w,h}\nabla\bphi_{j,h}^{n+1}\right)\\&\le 2\mu\Delta t\|l^{n}_{w,h}\nabla \bfeta_{\bv,j}^{n+1}\|\|l^{n}_{w,h}\nabla\bphi_{j,h}^{n+1}\|\\&\le\frac{2\mu-1}{4}\Delta t\|l^{n}_{w,h}\nabla\bphi_{j,h}^{n+1}\|^2+\frac{4\mu^2\Delta t}{2\mu-1}\|l^{n}_{w,h}\nabla \bfeta_{\bv,j}^{n+1}\|^2.
\end{align*}
For the fourth and fifth nonlinear terms, we use H\"older's inequality, Sobolev embedding theorems, Poincar\'e and Young’s inequalities to reveal
\begin{align*}
    \left|\lp<\hspace{-1mm}\bpsi_{h}\hspace{-1mm}>^n\cdot\nabla\big(\bv_j(t^{n+1})-\bv_j(t^{n})\big),\bphi_{j,h}^{n+1}\rp\right|&\le C\|\hspace{-1mm}<\hspace{-1mm}\bpsi_{h}\hspace{-1mm}>^n\hspace{-1mm}\|\|\nabla\big(\bv_j(t^{n+1})-\bv_j(t^{n})\big)\|_{L^6}\|\bphi_{j,h}^{n+1}\|_{L^3}\\&\le C\|\hspace{-1mm}<\hspace{-1mm}\bpsi_{h}\hspace{-1mm}>^n\hspace{-1mm}\|\|\bv_j(t^{n+1})-\bv_j(t^{n})\|_{H^2}\|\bphi_{j,h}^{n+1}\|^{\frac{1}{2}}\|\nabla \bphi_{j,h}^{n+1}\|^{\frac{1}{2}}\\&\le C\|\hspace{-1mm}<\hspace{-1mm}\bpsi_{h}\hspace{-1mm}>^n\hspace{-1mm}\|\|\bv_j(t^{n+1})-\bv_j(t^{n})\|_{H^2}\|\nabla \bphi_{j,h}^{n+1}\|\\&\le \frac{\alpha_j}{44}\|\nabla \bphi_{j,h}^{n+1}\|^2+\frac{C}{\alpha_j}\Delta t^2\|\hspace{-1mm}<\hspace{-1mm}\bpsi_{h}\hspace{-1mm}>^n\hspace{-1mm}\|^2\|\bv_{j,t}(t^{*})\|_{H^2}^2,\\
    \left|\lp\bpsi_{j,h}^n\cdot\nabla \bv_j(t^n),\bphi_{j,h}^{n+1}\rp\right|&\le \frac{\alpha_j}{44}\|\nabla \bphi_{j,h}^{n+1}\|^2+\frac{C}{\alpha_j}\|\bpsi_{j,h}^n\|^2\|\bv_j(t^n)\|^2_{H^2}.
\end{align*}
For the sixth, seventh, eighth, and ninth nonlinear terms, apply Young’s inequalities with \eqref{nonlinearbound} to obtain 
\begin{align*}
    \lp<\hspace{-1mm}\bfeta_{\bw}\hspace{-1mm}>^n\cdot\nabla\big(\bv_j(t^{n+1})-\bv_j(t^{n})\big),\bphi_{j,h}^{n+1}\rp&\le C\|\nabla \hspace{-1mm}<\hspace{-1mm}\bfeta_{\bw}\hspace{-1mm}>^n\hspace{-1mm}\|\|\nabla\big(\bv_j(t^{n+1})-\bv_j(t^{n})\big)\|\|\nabla \bphi_{j,h}^{n+1}\|\\&\le \frac{\alpha_j}{44}\|\nabla \bphi_{j,h}^{n+1}\|^2+\frac{C}{\alpha_j}\Delta t^2\|\nabla\hspace{-1mm} <\hspace{-1mm}\bfeta_{\bw}\hspace{-1mm}>^n\hspace{-1mm}\|^2\|\nabla \bv_{j,t}(t^{**})\|^2, \\\left|\lp<\hspace{-1mm}\bw_h\hspace{-1mm}>^n\cdot\nabla \bfeta_{\bv,j}^{n+1},\bphi_{j,h}^{n+1}\rp\right|&\le C\|\nabla\hspace{-1mm}<\hspace{-1mm}\bw_h\hspace{-1mm}>^n\hspace{-1mm}\|\|\nabla \bfeta_{\bv,j}^{n+1}\|\|\nabla\bphi_{j,h}^{n+1}\|\\&\le \frac{\alpha_j}{44}\|\nabla \bphi_{j,h}^{n+1}\|^2+\frac{C}{\alpha_j}\|\nabla\hspace{-1mm}<\hspace{-1mm}\bw_h\hspace{-1mm}>^n\hspace{-1mm}\|^2\|\nabla \bfeta_{\bv,j}^{n+1}\|^2,\\\left|\lp \bw_{j,h}^{'n}\cdot \nabla\bfeta_{\bv,j} ^n,\bphi_{j,h}^{n+1}\rp\right|&\le C\|\nabla \bw_{j,h}^{'n}\|\|\nabla\bfeta_{\bv,j} ^n\|\|\nabla\bphi_{j,h}^{n+1}\|\\&\le \frac{\alpha_j}{44}\|\nabla \bphi_{j,h}^{n+1}\|^2+\frac{C}{\alpha_j}\|\nabla \bw_{j,h}^{'n}\|^2\|\nabla\bfeta_{\bv,j} ^n\|^2,\\\left|\lp\bfeta^n_{\bw,j}\cdot\nabla \bv_j(t^n),\bphi_{j,h}^{n+1}\rp\right|&\le C\|\nabla \bfeta^n_{\bw,j}\|\|\nabla \bv_j(t^n)\|\|\nabla\bphi_{j,h}^{n+1}\|\\&\le \frac{\alpha_j}{44}\|\nabla \bphi_{j,h}^{n+1}\|^2+\frac{C}{\alpha_j}\|\nabla \bfeta^n_{\bw,j}\|^2\|\nabla \bv_j(t^n)\|^2.
\end{align*} 
Using Taylor’s series, Cauchy-Schwarz and Young's inequalities, the last term is evaluated as
\begin{align*}
    \left|G_1(t,\bv_j,\bw_j, \bphi_{j,h}^{n+1})\right|\le\frac{\alpha_j}{44}\|\nabla\bphi_{j,h}^{n+1}\|^2+C\Delta t^2\Big(\|\bv_{j,tt}(t_1^*)\|^2+\|\nabla \bv_{j,t}(t_2^*)\|^2+\|\nabla \bw_{j,t}(t_3^{*})\|^2\|\nabla \bv_j(t^{n+1})\|^2\\+\|\nabla  \bw_{j,t}(t_4^{*})\|^2+\|\nabla \big(\bw_j(t^n)-<\hspace{-1mm}\bw(t^n)\hspace{-1mm}>\hspace{-1mm}\big)\|^2\|\nabla \bv_{j,t}(t_5^{*})\|^2\Big),
\end{align*}
with $t_1^*,t_2^*,t_3^*,t_4^*,t_5^*,\in [t^n,t^{n+1}]$. Using these estimates in \eqref{phibd} and reducing produces
\begin{align}
    \frac{1}{2\Delta t}\lp\|\bphi_{j,h}^{n+1}\|^2-\|\bphi_{j,h}^{n}\|^2\rp+\frac{1}{4\Delta t}\|\bphi_{j,h}^{n+1}-\bphi_{j,h}^{n}\|^2+\frac{\Bar{\nu}+\Bar{\nu}_{m}}{4}\|\nabla \bphi_{j,h}^{n+1}\|^2\nonumber\\+\frac{2\mu-1}{4}\Delta t\|l^{n}_{w,h}\nabla \bphi_{j,h}^{n+1}\|^2\le\frac{|\nu_j-\nu_{m,j}|}{4}\|\nabla\bpsi_{j,h}^n\|^2+\frac{|\nu_j^{'}+\nu_{m,j}^{'}|}{4}\|\nabla\bphi_{j,h}^n\|^2\nonumber\\+\frac{11(\nu_j-\nu_{m,j})^2}{4\alpha_j}\|\nabla \bfeta_{\bw,j}^n\|^2+\frac{11(\Bar{\nu}+\Bar{\nu}_{m})^2}{4\alpha_j}\|\nabla \bfeta_{\bv,j}^{n+1}\|^2+\frac{11(\nu_j^{'}+\nu_{m,j}^{'})^2}{4\alpha_j}\|\nabla \bfeta_{\bv,j}^n\|^2\nonumber\\+C\frac{\mu^2\Delta t^2}{\alpha_j}\|l^{n}_{w,h}\|_{L^4}^4+\frac{4\mu^2\Delta t}{2\mu-1}\|l^{n}_{w,h}\nabla \bfeta_{\bv,j}^{n+1}\|^2+\frac{C}{\alpha_j}\Delta t^2\|\hspace{-1mm}<\hspace{-1mm}\bpsi_{h}\hspace{-1mm}>^n\hspace{-1mm}\|^2\|\bv_{j,t}(t^{*})\|_{\bH^2}^2\nonumber\\+\frac{C}{\alpha_j}\|\bpsi_{j,h}^n\|^2\|\bv_j(t^n)\|^2_{\bH^2}+\frac{C}{\alpha_j}\Delta t^2\|\nabla\hspace{-1mm} <\hspace{-1mm}\bfeta_{\bw}\hspace{-1mm}>^n\hspace{-1mm}\|^2\|\nabla \bv_{j,t}(t^{**})\|^2+\frac{C}{\alpha_j}\|\nabla\hspace{-1mm}<\hspace{-1mm}\bw_h\hspace{-1mm}>^n\hspace{-1mm}\|^2\|\nabla \bfeta_{\bv,j}^{n+1}\|^2\nonumber\\+\frac{C}{\alpha_j}\|\nabla \bw_{j,h}^{'n}\|^2\|\nabla\bfeta_{\bv,j} ^n\|^2+\frac{C}{\alpha_j}\|\nabla \bfeta^n_{\bw,j}\|^2\|\nabla \bv_j(t^n)\|^2+C\Delta t^2\Big(\|\bv_{j,tt}(t_1^*)\|^2+\|\nabla \bv_{j,t}(t_2^*)\|^2\nonumber\\+\|\nabla \bw_{j,t}(t_3^{*})\|^2\|\nabla \bv_j(t^{n+1})\|^2+\|\nabla  \bw_{j,t}(t_4^{*})\|^2+\|\nabla (\bw_j(t^n)-<\hspace{-1mm}\bw(t^n)\hspace{-1mm}>)\|^2\|\nabla \bv_{j,t}(t_5^{*})\|^2\Big).\label{phibd2}
\end{align}
Applying similar techniques to \eqref{psibd}, we get
\begin{align}
    \frac{1}{2\Delta t}\lp\|\bpsi_{j,h}^{n+1}\|^2-\|\bpsi_{j,h}^{n}\|^2\rp+\frac{1}{4\Delta t}\|\bpsi_{j,h}^{n+1}-\bpsi_{j,h}^{n}\|^2+\frac{\Bar{\nu}+\Bar{\nu}_{m}}{4}\|\nabla \bpsi_{j,h}^{n+1}\|^2\nonumber\\+\frac{2\mu-1}{4}\Delta t\|l^{n}_{v,h}\nabla \bpsi_{j,h}^{n+1}\|^2\le\frac{|\nu_j-\nu_{m,j}|}{4}\|\nabla\bphi_{j,h}^n\|^2+\frac{|\nu_j^{'}+\nu_{m,j}^{'}|}{4}\|\nabla\bpsi_{j,h}^n\|^2\nonumber\\+\frac{11(\nu_j-\nu_{m,j})^2}{4\alpha_j}\|\nabla \bfeta_{\bv,j}^n\|^2+\frac{11(\Bar{\nu}+\Bar{\nu}_{m})^2}{4\alpha_j}\|\nabla \bfeta_{\bw,j}^{n+1}\|^2+\frac{11(\nu_j^{'}+\nu_{m,j}^{'})^2}{4\alpha_j}\|\nabla \bfeta_{\bw,j}^n\|^2\nonumber\\+C\frac{\mu^2\Delta t^2}{\alpha_j}\|l^{n}_{v,h}\|_{L^4}^4+\frac{4\mu^2\Delta t}{2\mu-1}\|l^{n}_{v,h}\nabla \bfeta_{\bw,j}^{n+1}\|^2+\frac{C}{\alpha_j}\Delta t^2\|\hspace{-1mm}<\hspace{-1mm}\bphi_{h}\hspace{-1mm}>^n\hspace{-1mm}\|^2\|\bw_{j,t}(s^{*})\|_{\bH^2}^2\nonumber\\+\frac{C}{\alpha_j}\|\bphi_{j,h}^n\|^2\| \bw_j(t^n)\|^2_{\bH^2}+\frac{C}{\alpha_j}\Delta t^2\|\nabla\hspace{-1mm} <\hspace{-1mm}\bfeta_{\bv}\hspace{-1mm}>^n\|^2\|\nabla \bw_{j,t}(s^{**})\|^2+\frac{C}{\alpha_j}\|\nabla\hspace{-1mm}<\hspace{-1mm}\bv_h\hspace{-1mm}>^n\hspace{-1mm}\|^2\|\nabla \bfeta_{\bw,j}^{n+1}\|^2\nonumber\\+\frac{C}{\alpha_j}\|\nabla \bv_{j,h}^{'n}\|^2\|\nabla\bfeta_{\bw,j} ^n\|^2+\frac{C}{\alpha_j}\|\nabla \bfeta^n_{\bv,j}\|^2\|\nabla \bw_j(t^n)\|^2+C\Delta t^2\Big(\|\bw_{j,tt}(s_1^*)\|^2+\|\nabla \bw_{j,t}(s_2^*)\|^2\nonumber\\+\|\nabla \bv_{j,t}(s_3^{*})\|^2\|\nabla \bw_j(t^{n+1})\|^2+\|\nabla  \bv_{j,t}(s_4^{*})\|^2+\|\nabla \big(\bv_j(t^n)-<\hspace{-1mm}\bv(t^n)\hspace{-1mm}>\hspace{-1mm}\big)\|^2\|\nabla \bw_{j,t}(s_5^{*})\|^2\Big),\label{psibd2}
\end{align}
with $s_1^*,s_2^*,s_3^*,s_4^*,s_5^*,\in [t^n,t^{n+1}]$. Adding \eqref{phibd2} and \eqref{psibd2}, assuming $\mu>1/2$, dropping non-negative terms from left, multiplying both sides  by $2\Delta t$, using regularity assumptions, $\|\bphi_{j,h}^0\|=\|\bpsi_{j,h}^0\|=\|\nabla\bphi_{j,h}^0\|=\|\nabla\bpsi^0_{j,h}\|=0$, $\Delta tM=T$, and sum over the time-steps to find

\begin{align}
    \|\bphi_{j,h}^{M}\|^2+\|\bpsi_{j,h}^{M}\|^2+\frac{\Bar{\nu}+\Bar{\nu}_{m}}{2}\Delta t\left(\|\nabla \bphi_{j,h}^{M}\|^2+\|\nabla \bpsi_{j,h}^{M}\|^2\right)\nonumber\\+\frac{\alpha_j\Delta t}{2}\sum_{n=1}^{M-1}\Big(\|\nabla \bphi_{j,h}^{n}\|^2+\|\nabla \bpsi_{j,h}^{n}\|^2\Big)\le C\Delta t\frac{\mu^2\Delta t^2}{\alpha_j}\sum_{n=0}^{M-1}\Big(\|l^{n}_{v,h}\|_{L^4}^4+\|l^{n}_{w,h}\|_{L^4}^4\Big)\nonumber\\+\frac{8\mu^2\Delta t^2}{2\mu-1}\sum_{n=0}^{M-1}\Big(\|l^{n}_{v,h}\nabla \bfeta_{\bw,j}^{n+1}\|^2+\|l^{n}_{w,h}\nabla \bfeta_{\bv,j}^{n+1}\|^2\Big)\nonumber\\+\frac{C}{\alpha_j}\Delta t^2\sum_{n=1}^{M-1}\Delta t\Big(\|\hspace{-1mm}<\hspace{-1mm}\bphi_{h}\hspace{-1mm}>^n\hspace{-1mm}\|^2\|\bw_{j,t}(t)\|_{L^\infty(0,T;\bH^2(\Omega))}^2+\|\hspace{-1mm}<\hspace{-1mm}\bpsi_{h}\hspace{-1mm}>^n\hspace{-1mm}\|^2\|\bv_{j,t}(t)\|_{L^\infty(0,T;\bH^2(\Omega))}^2\Big)\nonumber\\+\frac{C\Delta t}{\alpha_j}\sum_{n=1}^{M-1}\left(\|\bphi_{j,h}^n\|^2\| \bw_j(t)\|^2_{L^\infty(0,T;\bH^2(\Omega))}+\|\bpsi_{j,h}^n\|^2\| \bv_j(t)\|^2_{L^\infty(0,T;\bH^2(\Omega))}\right)\nonumber\\+\frac{C}{\alpha_j}\sum_{n=0}^{M-1}\Delta t\Big(\|\nabla\hspace{-1mm}<\hspace{-1mm}\bv_h\hspace{-1mm}>^n\hspace{-1mm}\|^2\|\nabla \bfeta_{\bw,j}^{n+1}\|^2+\|\nabla\hspace{-1mm}<\hspace{-1mm}\bw_h\hspace{-1mm}>^n\hspace{-1mm}\|^2\|\nabla \bfeta_{\bv,j}^{n+1}\|^2\Big)\nonumber\\+\frac{C}{\alpha_j}\sum_{n=0}^{M-1}\Delta t\left(\|\nabla \bv_{j,h}^{'n}\|^2\|\nabla\bfeta_{\bw,j} ^n\|^2+\|\nabla \bw_{j,h}^{'n}\|^2\|\nabla\bfeta_{\bv,j} ^n\|^2\right)+CT\lp\Delta t^2+\frac{h^{2k}}{\alpha_j}+\frac{h^{2k}\Delta t^2}{\alpha_j}\rp.\label{regularity}
\end{align}

For the first sum on the right-hand-side, we get different bounds for 2D and 3D due to different
Sobolev embedding:

\begin{align*}
    2D:\hspace{3mm} \|l^{n}_{v,h}\|_{L^4}^4&\le C\max_j\|\bv_{j,h}^{'n}\|^2\|\nabla \bv_{j,h}^{'n}\|^2\le C\max_j\|\nabla \bv_{j,h}^{'n}\|^2,\\
    3D:\hspace{3mm} \|l^{n}_{v,h}\|_{L^4}^4&\le C\max_j\|\bv_{j,h}^{'n}\|\|\nabla \bv_{j,h}^{'n}\|^3\le C\max_j\|\nabla \bv_{j,h}^{'n}\|^3,
\end{align*}
where the second upper bound in each inequality coming from the stability theorem, and similarly for $\bw_{j,h}^{'n}$. With the inverse inequality and the stability bound (used on the $L^
2$ norm), we obtain
\begin{align*}
    \|\nabla \bv_{j,h}^{'n}\|\le Ch^{-1}\|\bv_{j,h}^{'n}\|\le Ch^{-1}.
\end{align*}
Thus, the bounds for both 2D or 3D:
\begin{align*}
    \|l^{n}_{v,h}\|_{L^4}^4&\le Ch^{2-d}\max_j\|\nabla \bv_{j,h}^{'n}\|^2,\\
    \|l^{n}_{w,h}\|_{L^4}^4&\le Ch^{2-d}\max_j\|\nabla \bw_{j,h}^{'n}\|^2.
\end{align*}
Using these bounds and the stability bound, the first sum on the right is bounded as

\begin{align*}
    C\Delta t\frac{\mu^2\Delta t^2}{\alpha_j}\sum_{n=0}^{M-1}\left(\|l^{n}_{v,h}\|_{L^4}^4+\|l^{n}_{w,h}\|_{L^4}^4\right)\le& Ch^{2-d}\Delta t\frac{\mu^2\Delta t^2}{\alpha_j}\max_j\left(\|\nabla \bv_{j,h}^{'n}\|^2+\|\nabla \bw_{j,h}^{'n}\|^2\right)\\\le& Ch^{2-d}\frac{\mu^2\Delta t^2}{\alpha_j}.
\end{align*}

For the first part (the second part follows analogously) of the second sum on the right in \eqref{regularity}, we use Agmon's inequality \cite{Robinson2016Three-Dimensional}, the inverse inequality \cite{BS08}, standard estimates of the $L^2$ projection error in the $H^1$ norm for the finite element functions, and the stability estimate \textcolor{black}{to obtain}

\begin{align*}
    \Delta t^2\sum_{n=0}^{M-1}\|l^{n}_{v,h}\nabla \bfeta_{\bw,j}^{n+1}\|^2&\le\Delta t^2\sum_{n=0}^{M-1}\|l^{n}_{v,h}\|_{\infty}^2\|\nabla \bfeta_{\bw,j}^{n+1}\|^2\\&\le Ch^{-1}\Delta t^2\sum_{n=0}^{M-1}\lp\max_j\|\nabla \bv_{j,h}^{'n}\|^2\rp\|\nabla \bfeta_{\bw,j}^{n+1}\|^2\\&\le Ch^{2k-1}\Delta t^2\sum_{n=0}^{M-1}\lp\max_j\|\nabla \bv_{j,h}^{'n}\|^2\rp| \bw_j^{n+1}|_{k+1}^2\\
    &\le Ch^{2k-1}\Delta t.
\end{align*}
Using the above bounds, stability estimate, and standard bounds for $\|\nabla\bfeta_{\bv,j}\|$ and $\|\nabla\bfeta_{\bw,j}\|$, we have
\begin{align}
    \|&\bphi_{j,h}^{M}\|^2+\|\bpsi_{j,h}^{M}\|^2+\frac{\Bar{\nu}+\Bar{\nu}_{m}}{2}\Delta t\left(\|\nabla \bphi_{j,h}^{M}\|^2+\|\nabla \bpsi_{j,h}^{M}\|^2\right)+\frac{\alpha_j\Delta t}{2}\sum_{n=1}^{M-1}\Big(\|\nabla \bphi_{j,h}^{n}\|^2+\|\nabla \bpsi_{j,h}^{n}\|^2\Big)\nonumber\\ &\le\frac{C}{\alpha_j}\Delta t^2\sum_{n=1}^{M-1}\Delta t\Big(\|\hspace{-1mm}<\hspace{-1mm}\bphi_{h}\hspace{-1mm}>^n\hspace{-1mm}\|^2\|\bw_{j,t}(t)\|_{L^\infty(0,T;\bH^2(\Omega))}^2+\|\hspace{-1mm}<\hspace{-1mm}\bpsi_{h}\hspace{-1mm}>^n\hspace{-1mm}\|^2\|\bv_{j,t}(t)\|_{L^\infty(0,T;\bH^2(\Omega))}^2\Big)\nonumber\\&+\frac{C\Delta t}{\alpha_j}\sum_{n=1}^{M-1}\left(\|\bphi_{j,h}^n\|^2\| \bw_j(t)\|^2_{L^\infty(0,T;\bH^2(\Omega))}+\|\bpsi_{j,h}^n\|^2\| \bv_j(t)\|^2_{L^\infty(0,T;\bH^2(\Omega))}\right)\nonumber\\&+C\big(\Delta t^2+h^{2k}+h^{2k}\Delta t^2+h^{2-d}\Delta t^2+ h^{2k-1}\Delta t\big).\label{regularity1}
\end{align}
 Sum over $j=1,\cdots\hspace{-0.35mm},J$, and apply triangle and Young's inequalities, to get
\begin{align}
    \sum_{j=1}^J\|\bphi_{j,h}^{M}\|^2&+\sum_{j=1}^J\|\bpsi_{j,h}^{M}\|^2+\frac{\alpha_j\Delta t}{2}\sum_{n=1}^{M}\sum_{j=1}^J\Big(\|\nabla \bphi_{j,h}^{n}\|^2+\|\nabla \bpsi_{j,h}^{n}\|^2\Big) \nonumber\\&\le\frac{C}{\alpha_j}\sum_{n=1}^{M-1}\lp\frac{\Delta t^3}{J}+\Delta t\rp\left(\sum_{j=1}^J\|\bphi_{j,h}^n\|^2+\sum_{j=1}^J\|\bpsi_{j,h}^n\|^2\right)\nonumber\\&+CJ\big(\Delta t^2+h^{2k}+h^{2k}\Delta t^2+h^{2-d}\Delta t^2+ h^{2k-1}\Delta t\big).\label{regularity2}
\end{align}

Applying the discrete Gr\"onwall Lemma \ref{dgl}, we have
\begin{align}
    \sum_{j=1}^J\|\bphi_{j,h}^{M}\|^2+\sum_{j=1}^J\|\bpsi_{j,h}^{M}\|^2+\frac{\alpha_j\Delta t}{2}\sum_{n=1}^{M}\sum_{j=1}^J\Big(\|\nabla \bphi_{j,h}^{n}\|^2+\|\nabla \bpsi_{j,h}^{n}\|^2\Big)\nonumber\\\le  \exp\left(\frac{CT}{\alpha_j}\left(1+\frac{\Delta t^2}{J}\right)\right)\lp\Delta t^2+h^{2k}+h^{2k}\Delta t^2+h^{2-d}\Delta t^2+ h^{2k-1}\Delta t\rp.\label{regularity3}
\end{align}

Now using the triangle and inequality we can write
\begin{align}
    \sum_{j=1}^J\|\be_{\bv,j}^{M}\|^2+\sum_{j=1}^J\|\be_{\bw,j}^{M}\|^2+\frac{\alpha_j\Delta t}{2}\sum_{n=1}^{M}\sum_{j=1}^J\Big(\|\nabla \be_{\bv,j}^{n}\|^2+\|\nabla \be_{\bw,j}^{n}\|^2\Big)\nonumber\\\le 2\Big(\sum_{j=1}^J\|\bphi_{j,h}^{M}\|^2+\sum_{j=1}^J\|\bfeta_{\bv,j}^{M}\|^2+\sum_{j=1}^J\|\bpsi_{j,h}^{M}\|^2+\sum_{j=1}^J\|\bfeta_{\bw,j}^{M}\|^2\nonumber\\+\frac{\alpha_j\Delta t}{2}\sum_{n=1}^{M}\sum_{j=1}^J\lp\|\nabla \bphi_{j,h}^{n}\|^2+\|\nabla \bfeta_{\bv,j}^{n}\|^2+\|\nabla \bpsi_{j,h}^{n}\|^2+\|\nabla \bfeta_{\bw,j}^{n}\|^2\rp\Big)\nonumber\\\le  C\exp\left(\frac{CT}{\alpha_j}\left(1+\frac{\Delta t^2}{J}\right)\right)\big(\Delta t^2+h^{2k}+h^{2k}\Delta t^2+h^{2-d}\Delta t^2+ h^{2k-1}\Delta t+h^{2k+2}\big).
\end{align}

Finally, again the use of triangle and Young's inequality completes the proof.
\end{proof}

\section{Numerical experiments} \label{numerical-experiments}
As the proposed algorithm is decoupled, at each time-step, we have two Oseen-type problems for each of the $J$ realizations. For MHD simulation, the pointwise enforcement of the solenoidal constraint is crucial \cite{HMX17}. In this paper, for all numerical experiments, we use \textcolor{black}{stable} $(P_2,P_1^{disc})$ Scott-Vogelius elements on barycenter refined regular triangular meshes for each of the Oseen-type problem \textcolor{black}{\cite{arnold1992quadratic}}. \textcolor{black}{The Scott-Vogelius element is pointwise divergence-free and thus allows to enforce the continuity equations $\nabla\cdot\bu_j=0$ and the solenoidal constraints $\nabla\cdot\bB_j=0$ in the discrete level\textcolor{black}{, up to} round-off error.} \textcolor{black}{Thus, we approximate the Els{\"{a}}sser variables $\bv_j$, and $\bw_j$ with a quadratic finite element and $q_j$, and $r_j$ with a linear finite element solving problem with the proposed scheme \eqref{weaknew1}-\eqref{weaknew2}.} We consider the tuning parameter $\mu=1$, \textcolor{black}{number of realizations $J=20$, and the index $j=1,2,\cdots\hspace{-0.35mm},J$} in all experiments. We write the codes, draw the geometries, and generate the regular triangular meshes in Freefem++\cite{H12}. In the first experiment, we \textcolor{black}{test} the predicted convergence rates, while the second experiment shows the energy stability of the scheme, and in \textcolor{black}{the} third and fourth experiments, we show that the scheme performs well in benchmark lid-driven cavity, and channel flow past a step, respectively.

\subsection{Convergence rate verification}  \label{conv-rate} To verify the  spatial and temporal convergence rates, we consider a domain $\Omega=(0,1)^2$ and create structured meshes for $h=1/4$, $1/8$, $1/16$, $1/32$, and $1/64$ using successive refinements. We consider two independent and uniformly distributed random samples for the kinematic viscosity and magnetic diffusivity pair $\{(\nu_j,\nu_{m,j})\in[0.009,0.011]\times[0.09,0.11]\}$, and $\{(\nu_j,\nu_{m,j})\in[0.009,0.011]\times[0.0009,0.0011]\}$ with mean $(\Bar{\nu},\Bar{\nu}_m)=(0.01, 0.1)$, and $(0.01, 0.001)$, respectively. For both samples we consider, the conditions $\alpha_j>0$ hold true. Instead of computing the solution for each pair independently and then taking their average, we compute the average of these $J$ independent solutions by using the proposed ensemble Algorithm \ref{Algn1}.

For this experiment, we begin with the following analytical functions
\begin{align}
{\bv}=\begin{pmatrix}\cos y+(1+e^t)\sin y \\ \sin x+(1+e^t)\cos x\end{pmatrix}, \
{\bw}=\begin{pmatrix}\cos y-(1+e^t)\sin y \\ \sin x-(1+e^t)\cos x\end{pmatrix}, \ p =\sin(x+y)(1+e^t),\hspace{1mm}\text{and}\hspace{1mm} \lambda=0.\label{exact-solution}    
\end{align}
Next, we consider $J$ different manufactured solutions introducing a perturbation parameter $\epsilon$ as
\begin{align}
    \bv_j\textcolor{black}{(x,y,t)}:=\lp 1+\frac{(-1)^{j+1}\lceil j/2\rceil}{5}\epsilon\rp\bv, \hspace{1mm}\text{and}\hspace{1mm}\bw_j\textcolor{black}{(x,y,t)}:=\lp 1+\frac{(-1)^{j+1}\lceil j/2\rceil}{5}\epsilon\rp\bw.\end{align}
The above exact solutions are divergence-free. For each pair $(\nu_j,\nu_{m,j})$, using the above exact solutions, we compute the forcing vectors from \eqref{els1}-\eqref{els2}.  We use $\bv_j^0=\bv_j(x,y,0)$, and $\bw_j^0=\bw_j(x,y,0)$ as the initial conditions and $\bv_j|_{\partial\Omega}=\bv_j$, and $\bw_j|_{\partial\Omega}=\bw_j$ as the boundary conditions.

The ensemble average error is defined as $<\hspace{-1mm}\be_{\bz}\hspace{-1mm}>^n:=<\hspace{-1mm}\bz_h\hspace{-1mm}>^n-<\hspace{-1mm}\bz(t^n)\hspace{-1mm}>$, where $\bz=\bv\hspace{1mm}\text{or}\hspace{1mm}\bw$, which reduces to $<\hspace{-1mm}\be_{\bz}\hspace{-1mm}>^n=<\hspace{-1mm}\bz_h\hspace{-1mm}>^n-\bz(t^n)$. We compute the \textcolor{black}{$L^2(0,T;\bH^1)$} norm of the error and is denoted by $\|\cdot\|_{2,1}$. 

For the spatial convergence, we consider a small end time $T=0.001$ so that the temporal error does not dominate over the spatial error, and use a fixed time-step size $\Delta t=T/8$. We run a complete simulation beginning  with $h=1/4$ and repeat with the successively refined meshes until we have $h=1/64$. In Tables \ref{spatial-convergence-ep-0}-\ref{spatial-convergence-ep-0-01}, we list the norm of the spatial errors and compute the spatial convergence rates for the two sets of samples of the viscosity pair, for several choices of $\epsilon$. In each case, we observe a second order spatial convergence, which is predicted by our error analysis given in Theorem \ref{convergence-theorem}.

\begin{table}[!ht]
	\begin{center}
		\small\begin{tabular}{|c|c|c|c|c|c|c|c|c|}\hline
			\multicolumn{9}{|c|}{Spatial convergence (fixed $T=0.001$, $\Delta t=T/8$) with $j=1,2,\cdots\hspace{-0.35mm}, 20$}\\\hline
		$\hspace{-1.5mm}\epsilon=0.0\hspace{-1.5mm}$	&\multicolumn{4}{|c|}{$\{(\nu_j,\nu_{m,j})\in[0.009,0.011]\times[0.09,0.11]\}\hspace{-1mm}$}&\multicolumn{4}{|c|}{$\hspace{-1mm}\{(\nu_j,\nu_{m,j})\in[0.009,0.011]\times[0.0009,0.0011]\}\hspace{-2mm}$}\\\hline
			$h$ & $\|\hspace{-1mm}<\hspace{-1mm}\be_{\bv}\hspace{-1mm}>\hspace{-1mm}\|_{2,1}$ & rate   &$\|\hspace{-1mm}<\hspace{-1mm}\be_{\bw}\hspace{-1mm}>\hspace{-1mm}\|_{2,1}$  & rate & $\|\hspace{-1mm}<\hspace{-1mm}\be_{\bv}\hspace{-1mm}>\hspace{-1mm}\|_{2,1}$ & rate   &$\|\hspace{-1mm}<\hspace{-1mm}\be_{\bw}\hspace{-1mm}>\hspace{-1mm}\|_{2,1}$  & rate\\ \hline
			 $\frac{1}{4}$ &  1.0741e-04  &&  2.0597e-04 & &  1.0752e-04  &&  2.0602e-04 &\\ \hline
			 $\frac{1}{8}$&  2.7081e-05  &1.99&  5.1620e-05 & 2.00 &  2.7108e-05  &1.99&   5.1653e-05 & 2.00\\\hline
			 $\frac{1}{16}$&  6.802\textcolor{black}{5}e-06  &1.99&  1.295\textcolor{black}{7}e-05 & 1.99 &  6.8037e-06  &1.99&  1.30\textcolor{black}{09}e-05 & 1.99\\\hline
			$\frac{1}{32}$&  1.71\textcolor{black}{68}e-06 &1.99& 3.25\textcolor{black}{23}e-06& 1.99 &  1.7110e-06 &1.99& 3.3181e-06& 1.97\\\hline
			$\frac{1}{64}$ & 4.3\textcolor{black}{049}e-07  & \textcolor{black}{2.00}&  8.1\textcolor{black}{168}e-07  &2.00 & 4.3117e-07  & 1.99&   8.5456e-07  &1.96\\\hline
		\end{tabular}
	\end{center}
	\caption{\footnotesize Spatial errors and  convergence rates for $\bv$ and $\bw$ with $\epsilon=0.0$.}\label{spatial-convergence-ep-0}
\end{table}

\begin{table}[!ht]
	\begin{center}
		\small\begin{tabular}{|c|c|c|c|c|c|c|c|c|}\hline
			\multicolumn{9}{|c|}{Spatial convergence (fixed $T=0.001$, $\Delta t=T/8$) with $j=1,2,\cdots\hspace{-0.35mm}, 20$}\\\hline
		$\hspace{-1.5mm}\epsilon=0.001\hspace{-1.5mm}$	&\multicolumn{4}{|c|}{$\hspace{-1mm}\{(\nu_j,\nu_{m,j})\in[0.009,0.011]\times[0.09,0.11]\}\hspace{-1mm}$}&\multicolumn{4}{|c|}{$\hspace{-1mm}\{(\nu_j,\nu_{m,j})\in[0.009,0.011]\times[0.0009,0.0011]\}\hspace{-2mm}$}\\\hline
			$h$ & $\|\hspace{-1mm}<\hspace{-1mm}\be_{\bv}\hspace{-1mm}>\hspace{-1mm}\|_{2,1}$ & rate   &$\|\hspace{-1mm}<\hspace{-1mm}\be_{\bw}\hspace{-1mm}>\hspace{-1mm}\|_{2,1}$  & rate & $\|\hspace{-1mm}<\hspace{-1mm}\be_{\bv}\hspace{-1mm}>\hspace{-1mm}\|_{2,1}$ & rate   &$\|\hspace{-1mm}<\hspace{-1mm}\be_{\bw}\hspace{-1mm}>\hspace{-1mm}\|_{2,1}$  & rate\\ \hline
			 $\frac{1}{4}$ & 1.0741e-04 && 2.0597e-04   & &1.0752e-04    && 2.0602e-04  &\\ \hline
			 $\frac{1}{8}$& 2.7081e-05  &1.99&  5.1620e-05   & 2.00 &  2.7108e-05 &1.99&  5.1653e-05  & 2.00\\\hline
			 $\frac{1}{16}$& 6.8025e-06  &1.99& 1.2957e-05 & 1.99 &  6.8037e-06 &1.99& 1.3009e-05  & 1.99\\\hline
			$\frac{1}{32}$&  1.7168e-06 &1.99&3.2523e-06 & 1.99 &   1.7110e-06&1.99 & 3.3181e-06&1.97\\\hline
			$\frac{1}{64}$ & 4.3048e-07  &2.00 & 8.1168e-07   &2.00 & 4.3123e-07  &1.99 & 8.5458e-07    &1.96\\\hline
		\end{tabular}
	\end{center}
	\caption{\footnotesize Spatial errors and  convergence rates for $\bv$ and $\bw$ with $\epsilon=0.001$.}\label{spatial-convergence-ep-0-001}
\end{table}

\begin{table}[!ht]
	\begin{center}
		\small\begin{tabular}{|c|c|c|c|c|c|c|c|c|}\hline
			\multicolumn{9}{|c|}{Spatial convergence (fixed $T=0.001$, $\Delta t=T/8$) with $j=1,2,\cdots\hspace{-0.35mm}, 20$}\\\hline
		$\hspace{-1.5mm}\epsilon=0.01\hspace{-1.5mm}$	&\multicolumn{4}{|c|}{$\hspace{-1mm}\{(\nu_j,\nu_{m,j})\in[0.009,0.011]\times[0.09,0.11]\}$}&\multicolumn{4}{|c|}{$\{(\nu_j,\nu_{m,j})\in[0.009,0.011]\times[0.0009,0.0011]\}\hspace{-2mm}$}\\\hline
			$h$ & $\|\hspace{-1mm}<\hspace{-1mm}\be_{\bv}\hspace{-1mm}>\hspace{-1mm}\|_{2,1}$ & rate   &$\|\hspace{-1mm}<\hspace{-1mm}\be_{\bw}\hspace{-1mm}>\hspace{-1mm}\|_{2,1}$  & rate & $\|\hspace{-1mm}<\hspace{-1mm}\be_{\bv}\hspace{-1mm}>\hspace{-1mm}\|_{2,1}$ & rate   &$\|\hspace{-1mm}<\hspace{-1mm}\be_{\bw}\hspace{-1mm}>\hspace{-1mm}\|_{2,1}$  & rate\\ \hline
			 $\frac{1}{4}$ & 1.0741e-04  &&  2.0597e-04   & & 1.0752e-04   && 2.0602e-04  &\\ \hline
			 $\frac{1}{8}$&2.7081e-05   &1.99& 5.1620e-05 &2.00&   2.7108e-05& 1.99 &5.1653e-05  & 2.00\\\hline
			 $\frac{1}{16}$&6.8031e-06 &1.99&1.2957e-05&1.99 & 6.8046e-06  &1.99& 1.3009e-05  & 1.99\\\hline
			$\frac{1}{32}$& 1.7182e-06 &1.99&3.2527e-06&1.99 & 1.7146e-06  &1.99&3.3186e-06 &1.97 \\\hline
			$\frac{1}{64}$ & 4.3552e-07  & 1.98& 8.1418e-07   & 2.00& 4.5043e-07  &1.93 &8.6250e-07     &1.94\\\hline
		\end{tabular}
	\end{center}
	\caption{\footnotesize Spatial errors and  convergence rates for $\bv$ and $\bw$ with $\epsilon=0.01$.}\label{spatial-convergence-ep-0-01}
\end{table}

\begin{table}[!ht]
	\begin{center}
		\small\begin{tabular}{|c|c|c|c|c|c|c|c|c|}\hline
			\multicolumn{9}{|c|}{Temporal convergence (fixed $h=1/64$, $T=1$) with $j=1,2,\cdots\hspace{-0.35mm}, 20$}\\\hline
			$\hspace{-1mm}\epsilon=0.0\hspace{-1mm}$&\multicolumn{4}{|c|}{$\hspace{-1mm}\{(\nu_j,\nu_{m,j})\in[0.009,0.011]\times[0.09,0.11]\}\hspace{-1mm}$}&\multicolumn{4}{|c|}{\hspace{-1mm}$\{(\nu_j,\nu_{m,j})\in[0.009,0.011]\times[0.0009,0.0011]\}\hspace{-2mm}$}\\\hline
			$\Delta t$ & $\|\hspace{-1mm}<\hspace{-1mm}\be_{\bv}\hspace{-1mm}>\hspace{-1mm}\|_{2,1}$ & rate   &$\|\hspace{-1mm}<\hspace{-1mm}\be_{\bw}\hspace{-1mm}>\hspace{-1mm}\|_{2,1}$  & rate & $\|\hspace{-1mm}<\hspace{-1mm}\be_{\bv}\hspace{-1mm}>\hspace{-1mm}\|_{2,1}$ & rate   &$\|\hspace{-1mm}<\hspace{-1mm}\be_{\bw}\hspace{-1mm}>\hspace{-1mm}\|_{2,1}$  & rate \\ \hline
			 $\frac{T}{2}$ &2.2419e-01  & & 1.6653e-01& &9.5227e-01  & & 7.5549e-01&\\ \hline
			 $\frac{T}{4}$& 1.0851e-01  &1.05 &7.9697e-02 & 1.06 & 5.2621e-01  &0.86 &4.5248e-01 & 0.74\\\hline
			 $\frac{T}{8}$& 5.5987e-02  &0.95& 4.0986e-02 & 0.96 & 3.0570e-01  &0.78& 2.7650e-01 & 0.71\\\hline
			 $\frac{T}{16}$&2.9231e-02   &0.94& 2.1480e-02  & 0.93 &1.7541e-01   &0.80& 1.6320e-01  & 0.76\\\hline
			$\frac{T}{32}$& 1.5075e-02   &0.96& 1.1117e-02 & 0.95 & 9.6367e-02   &0.86& 9.0908e-02 & 0.84\\\hline
			$\frac{T}{64}$ & 7.6728e-03   &0.97& 5.6704e-03  & 0.97 & 5.0830e-02    &0.92& 4.8256e-02  & 0.91\\\hline
			$\frac{T}{128}$ & 3.8730e-03  &0.99& 2.8659e-03  & 0.98 & 2.6137e-02  &0.96& 2.4887e-02  & 0.96\\\hline
		\end{tabular}
	\end{center}
	\caption{\footnotesize Temporal errors and  convergence rates for $\bv$ and $\bw$ with $\epsilon=0.0.$}\label{temporal-convergence-ep-0-0}
\end{table}

\begin{table}[!ht]
	\begin{center}
		\small\begin{tabular}{|c|c|c|c|c|c|c|c|c|}\hline
			\multicolumn{9}{|c|}{Temporal convergence (fixed $h=1/64$, $T=1$) with $j=1,2,\cdots\hspace{-0.35mm}, 20$}\\\hline
			$\hspace{-1.5mm}\epsilon=0.001\hspace{-1.5mm}$&\multicolumn{4}{|c|}{$\hspace{-1mm}\{(\nu_j,\nu_{m,j})\in[0.009,0.011]\times[0.09,0.11]\}\hspace{-1mm}$}&\multicolumn{4}{|c|}{$\hspace{-1mm}\{(\nu_j,\nu_{m,j})\in[0.009,0.011]\times[0.0009,0.0011]\}\hspace{-2mm}$}\\\hline
			$\Delta t$ & $\|\hspace{-1mm}<\hspace{-1mm}\be_{\bv}\hspace{-1mm}>\hspace{-1mm}\|_{2,1}$ & rate   &$\|\hspace{-1mm}<\hspace{-1mm}\be_{\bw}\hspace{-1mm}>\hspace{-1mm}\|_{2,1}$  & rate & $\|\hspace{-1mm}<\hspace{-1mm}\be_{\bv}\hspace{-1mm}>\hspace{-1mm}\|_{2,1}$ & rate   &$\|\hspace{-1mm}<\hspace{-1mm}\be_{\bw}\hspace{-1mm}>\hspace{-1mm}\|_{2,1}$  & rate \\ \hline
			 $\frac{T}{2}$ & 2.2417e-01  & &1.6646e-01 & &9.5155e-01   & &7.5403e-01 &\\ \hline
			 $\frac{T}{4}$& 1.0850e-01  &1.05 &7.9669e-02 &1.06  & 5.2583e-01  &0.86 &4.5182e-01 & 0.74 \\\hline
			 $\frac{T}{8}$& 5.5981e-02  &0.95& 4.0972e-02 & 0.96 & 3.0551e-01 &0.78& 2.7619e-01 &0.71 \\\hline
			 $\frac{T}{16}$&2.9228e-02  &0.94& 2.1473e-02  &0.93  & 1.7532e-01  &0.80& 1.6307e-01  &0.76 \\\hline
			$\frac{T}{32}$& 1.5073e-02   &0.96& 1.1114e-02 &0.95  & 9.6320e-02  &0.86& 9.0848e-02 &0.84\\\hline
			$\frac{T}{64}$ & 7.6711e-03 &0.97& 5.6682e-03 & 0.97 & 5.0802e-02 &0.92& 4.8223e-02 &0.91 \\\hline
			$\frac{T}{128}$ & 3.8715e-03  &0.99& 2.8645e-03  & 0.98&  2.6115e-02& 0.96 & 2.4862e-02 &0.96\\\hline
		\end{tabular}
	\end{center}
	\caption{\footnotesize Temporal errors and  convergence rates for $\bv$ and $\bw$ with $\epsilon=0.001.$}
\end{table}

\begin{table}[!ht]
	\begin{center}
		\small\begin{tabular}{|c|c|c|c|c|c|c|c|c|}\hline
			\multicolumn{9}{|c|}{Temporal convergence (fixed $h=1/64$, $T=1$) with $j=1,2,\cdots\hspace{-0.35mm}, 20$}\\\hline
			$\hspace{-1.5mm}\epsilon=0.01\hspace{-1.5mm}$&\multicolumn{4}{|c|}{$\hspace{-1mm}\{(\nu_j,\nu_{m,j})\in[0.009,0.011]\times[0.09,0.11]\}\hspace{-1mm}$}&\multicolumn{4}{|c|}{$\hspace{-1mm}\{(\nu_j,\nu_{m,j})\in[0.009,0.011]\times[0.0009,0.0011]\}\hspace{-2mm}$}\\\hline
			$\Delta t$ & $\|\hspace{-1mm}<\hspace{-1mm}\be_{\bv}\hspace{-1mm}>\hspace{-1mm}\|_{2,1}$ & rate   &$\|\hspace{-1mm}<\hspace{-1mm}\be_{\bw}\hspace{-1mm}>\hspace{-1mm}\|_{2,1}$  & rate & $\|\hspace{-1mm}<\hspace{-1mm}\be_{\bv}\hspace{-1mm}>\hspace{-1mm}\|_{2,1}$ & rate   &$\|\hspace{-1mm}<\hspace{-1mm}\be_{\bw}\hspace{-1mm}>\hspace{-1mm}\|_{2,1}$  & rate \\ \hline
			 $\frac{T}{2}$ &2.2247e-01  & &1.6093e-01 & & 8.9271e-01 & &6.4701e-01 &\\ \hline
			 $\frac{T}{4}$& 1.0787e-01  & 1.04&7.7193e-02 &1.06  & 4.9474e-01  &0.85 &3.9981e-01 &0.69 \\\hline
			 $\frac{T}{8}$& 5.5785e-02  &0.95& 3.9847e-02 &0.95  & 2.8963e-01 &0.77& 2.5194e-01 &0.67 \\\hline
			 $\frac{T}{16}$&2.9250e-02  &0.93& 2.1008e-02  &0.92  & 1.6848e-01 &0.78& 1.5314e-01  &0.72 \\\hline
			$\frac{T}{32}$& 1.5207e-02   &0.94& 1.0976e-02 & 0.94 & 9.4351e-2  &0.84& 8.7772e-02 &0.80\\\hline
			$\frac{T}{64}$ & 7.8585e-03 &0.95& 5.6893e-03 & 0.95 & 5.0981e-02 &0.89& 4.7969e-02 &0.87 \\\hline
			$\frac{T}{128}$ & 4.0843e-03  &0.94& 2.9624e-03  &0.94 &2.7116e-02 &0.91& 2.5650e-02 &0.90 \\\hline
		\end{tabular}
	\end{center}
	\caption{\footnotesize Temporal errors and  convergence rates for $\bv$ and $\bw$ with $\epsilon=0.01.$}\label{temporal-convergence-ep-0-01}
\end{table}

To observe the temporal convergence rates, we use a fixed $h=1/64$, and the simulation end time $T=1$ and run  the simulations varying the time-step size as $\Delta t=T/2,\hspace{1mm}T/4,\hspace{1mm}T/8,\hspace{1mm}T/16,\hspace{1mm}T/32, T/64$, and $T/128$.
In Tables \ref{temporal-convergence-ep-0-0}-\ref{temporal-convergence-ep-0-01}, we represent the $L^2(0,T;\bH^1)$ norm of the temporal errors and their convergence rates. As $\Delta t\rightarrow 0$, we observe a first order temporal convergence rate, which is also consistent with the theoretical analysis in Theorem \ref{convergence-theorem}.

\vspace{-1ex}
\subsection{Energy stability test}  To test the energy stability of the proposed scheme \eqref{weaknew1}-\eqref{weaknew2}, we keep the same domain, the initial conditions with $\epsilon=0.01$, and the finite element pairs as given in the previous experiment \ref{conv-rate}. A uniformly distributed random of sample $\{(\nu_j,\nu_{m,j})\in[0.009,0.011]\times[0.09,0.11]\}$ with mean $(\Bar{\nu},\Bar{\nu}_m)=(0.01,0.1)$ is considered, so that $\alpha_j>0$, for all $j$. Clearly, the sample has a maximum $10\%$ fluctuation from the mean.  We consider homogeneous boundary conditions for the velocity and magnetic field, and zero body forces (i.e. $\bif_{j}=\bg_{j}=\textbf{0}$) so that the system does not have any external source of energy. We choose $h=1/32$, time-step size $\Delta t=0.05$, and solve the problem in \eqref{els1}-\eqref{els2} by the proposed Algorithm \ref{Algn1}. We define the energy of the system as:
\textcolor{black}{$$E_h^n:=\frac12\left(\|\hspace{-1mm} <\hspace{-1mm}\bv_h\hspace{-1mm}>^n\hspace{-1mm}\|^2+\|\hspace{-1mm}<\hspace{-1mm} \bw_h^n\hspace{-1mm}>\hspace{-1mm}\|^2\right).$$} 
\noindent The time evolution of energy until the end time $T=1$ is showing in Fig. \ref{energy-curve}. We observe that the system is energy dissipation, and is consistent with the stability result in Theorem \ref{stability-analysis}. 
\begin{figure}[h!]
\begin{center}
            \includegraphics[width = 0.6\textwidth, height=0.35\textwidth,viewport=-300 0 1200 890, clip]{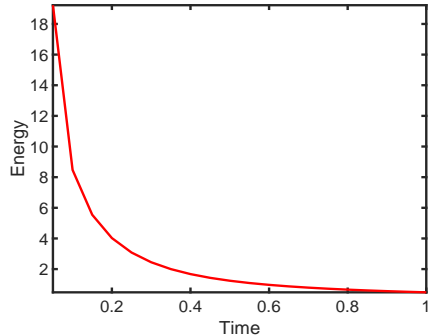}
\end{center}\caption{Energy of the MHD system versus time.}\label{energy-curve}
\end{figure}

\subsection{Lid-driven cavity}
In this test, we consider a 2D benchmark regularized lid-driven cavity problem \cite{balajewicz2013low, fick2018stabilized, lee2019study} with domain \textcolor{black}{$\Omega=(-1,1)^2$}. No-slip boundary condition for \textcolor{black}{the} velocity is \textcolor{black}{enforced on} all boundaries except the top (which is the lid of the cavity), where we impose $$\bu_j=\textcolor{black}{\lp 1+\frac{(-1)^{j+1}\lceil j/2\rceil}{5}\epsilon\rp}\begin{pmatrix}(1-x^2)^2\\0\end{pmatrix}.$$ For the magnetic field boundary conditions, we assign $$\bB_j=\textcolor{black}{\lp 1+\frac{(-1)^{j+1}\lceil j/2\rceil}{5}\epsilon\rp}\begin{pmatrix}0\\1\end{pmatrix}$$ on \textcolor{black}{all sides}. We assume the flow begins from rest, initially there is no magnetic field, and no external source is present in the system (i.e. $\bif_{j}=\bg_{j}=\textbf{0}$). We \textcolor{black}{generate a computational} mesh that provides a total of 1,307,690 \textcolor{black}{degrees of freedom (dofs)} for each of velocity and magnetic field and a total of $163,702$ \textcolor{black}{dofs} for each of pressure and magnetic pressure.

To study the long-time unsteady flow behavior,  we \textcolor{black}{first} validate our computation with available data \textcolor{black}{from} the literature \cite{fick2018stabilized}. Thus, we run a simulation in absence of the magnetic field (setting $s=0$ in the model) with the Reynolds number $Re=15000$ (that is, no perturbation in \textcolor{black}{the viscosities, initial and boundary conditions are} considered). We define the viscosity $\nu=2/Re$, since the maximum velocity of the lid is 1 and the characteristic length is 2. Thanks to the unconditional stability, we run the simulation with a large time-step size $\Delta t=5$ until the end $T=600$ and plot the solution in Fig. \ref{ldc_s_0_Re_15000}. We observe a large primary vortex in the center of the cavity, and other vortices are near to the three corners except for the upper right. We note that the same observation was made by Fick et al. in \cite{fick2018stabilized}.

Next, we consider a total of $20$ uniformly distributed random Reynolds numbers and magnetic diffusivities from the intervals $[13636.36, 16666.67]$ and $[0.009,0.011]$, respectively. That is, the sample mean of the Reynolds numbers and the sample mean of the magnetic diffusivities are $15151.52$, and $0.01$, respectively.

We run the simulations for several values of the coupling parameter $s$ \textcolor{black}{with a fixed $\epsilon=0.01$} and plot the velocity (speed contour) and magnetic field (strength) solutions at $T=600$ in Figures  \ref{ldc_u_s_0_001_to_s_1}-\ref{ldc_B_s_0_001_to_s_1}. From the speed contour plots, Fig. \ref{ldc_u_s_0_001_to_s_1}, as $s$ increases, a change in the flow structure is observed and with $s=1$, the center of the circulation gets close to $(0,0)$ and the magnetic field strength \textcolor{black}{realizes} a type of reflection symmetry.

\begin{figure}[h!]
\begin{center}
            \includegraphics[width = 0.4\textwidth, height=0.35\textwidth,clip]{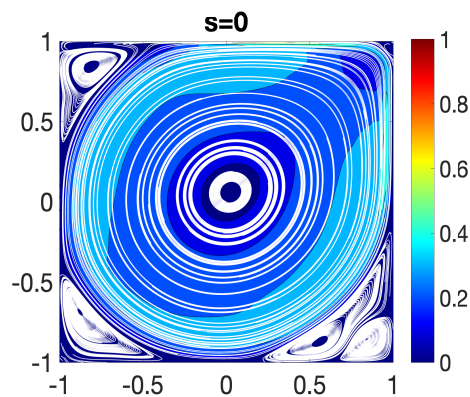}
	 	 	\caption{A lid-driven cavity problem. Velocity solution (shown as streamlines over speed contours) for $Re=15000$. \label{ldc_s_0_Re_15000}}
\end{center}
\end{figure}

\begin{figure}[ht] 
  \begin{subfigure}[b]{0.5\linewidth}
    \centering
    \includegraphics[width=0.8\textwidth, height=0.7\textwidth]{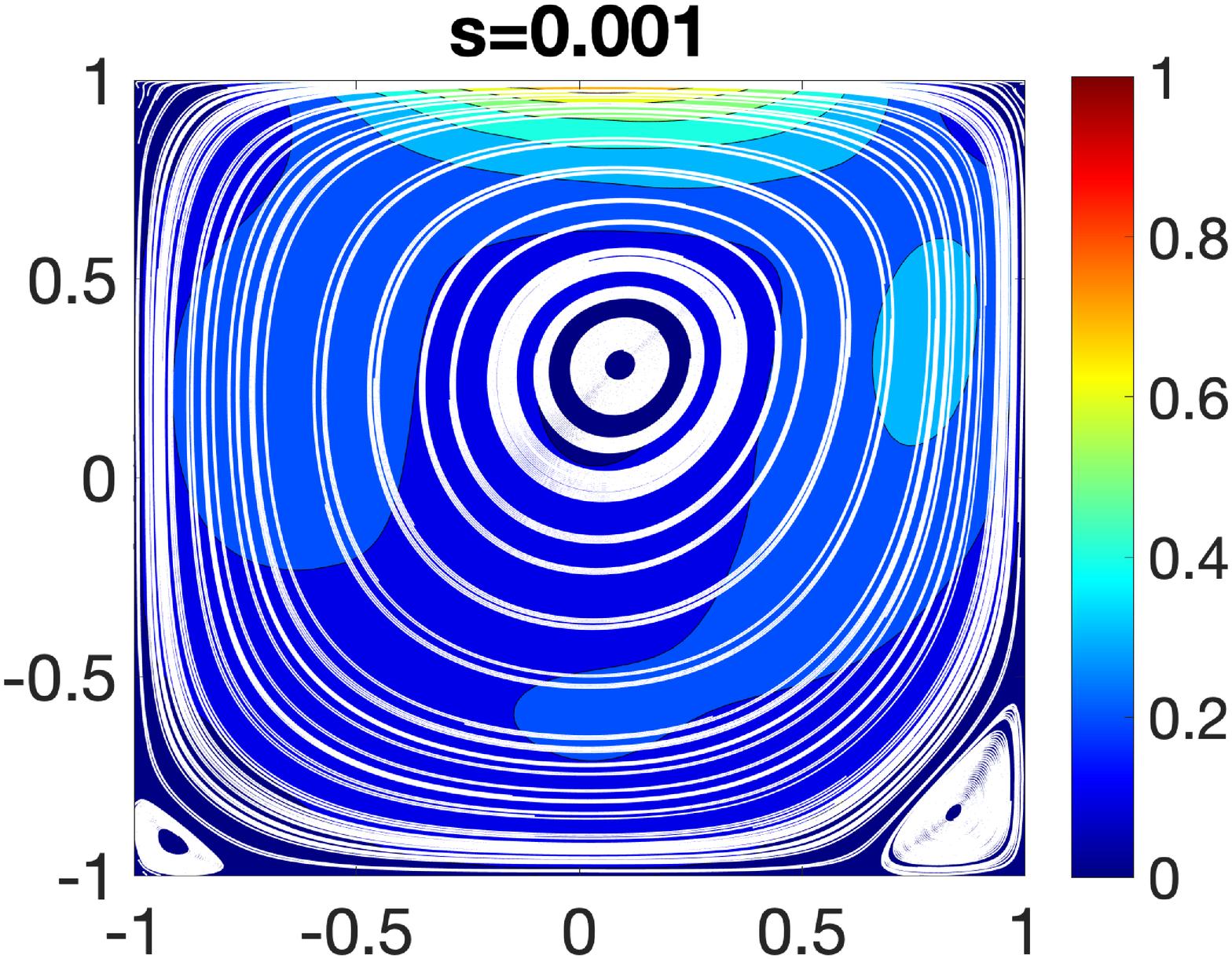}\vspace{-4.5mm}
  \end{subfigure}\hspace{-10ex}
  \begin{subfigure}[b]{0.5\linewidth}
    \centering
    \includegraphics[width=0.8\textwidth, height=0.7\textwidth]{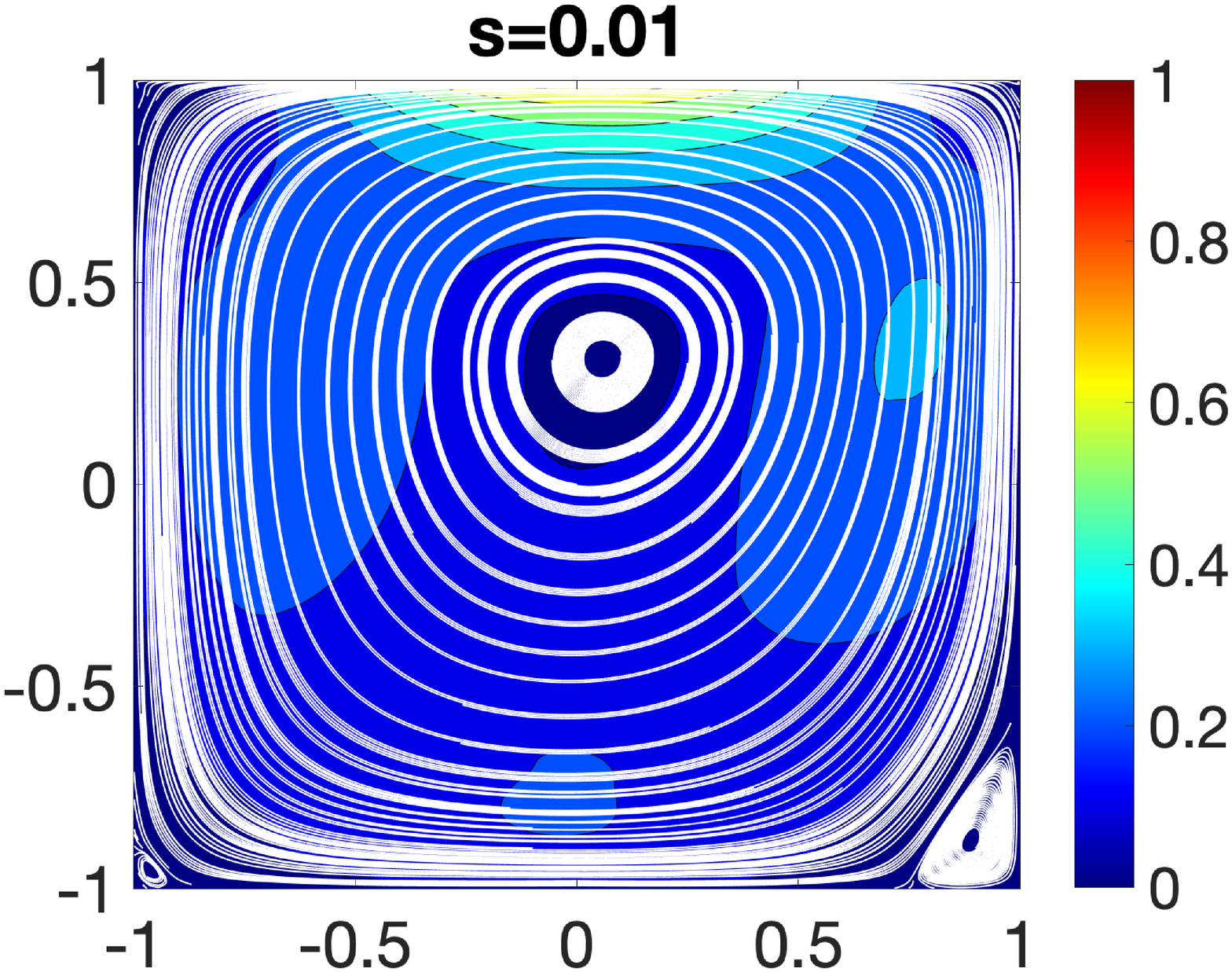}\vspace{-4.5mm}
  \end{subfigure} \\
  \begin{subfigure}[b]{0.5\linewidth}
    \centering
    \includegraphics[width=0.8\textwidth, height=0.7\textwidth]{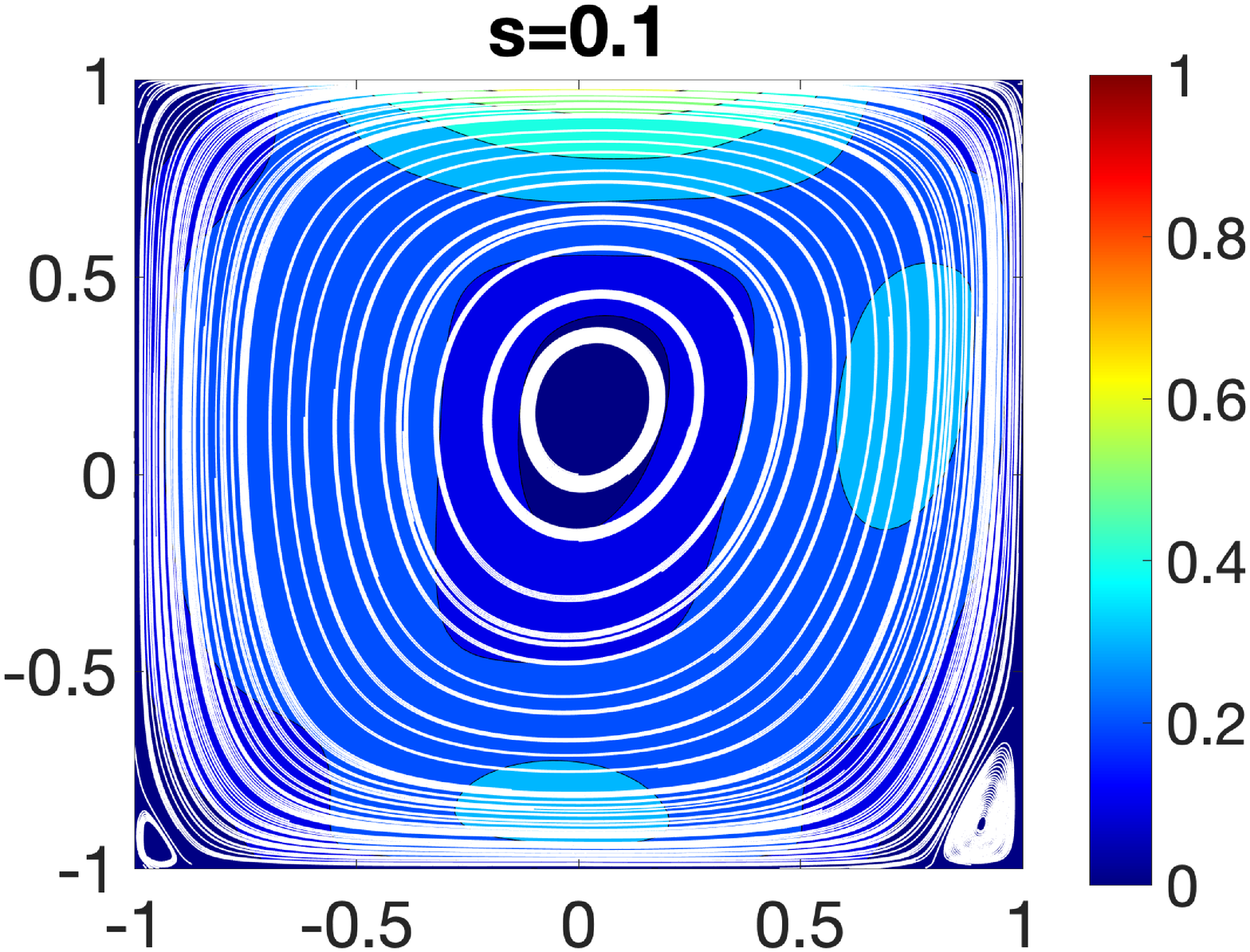} 
  \end{subfigure}\hspace{-10ex}
  \begin{subfigure}[b]{0.5\linewidth}
    \centering
    \includegraphics[width=0.8\textwidth, height=0.7\textwidth]{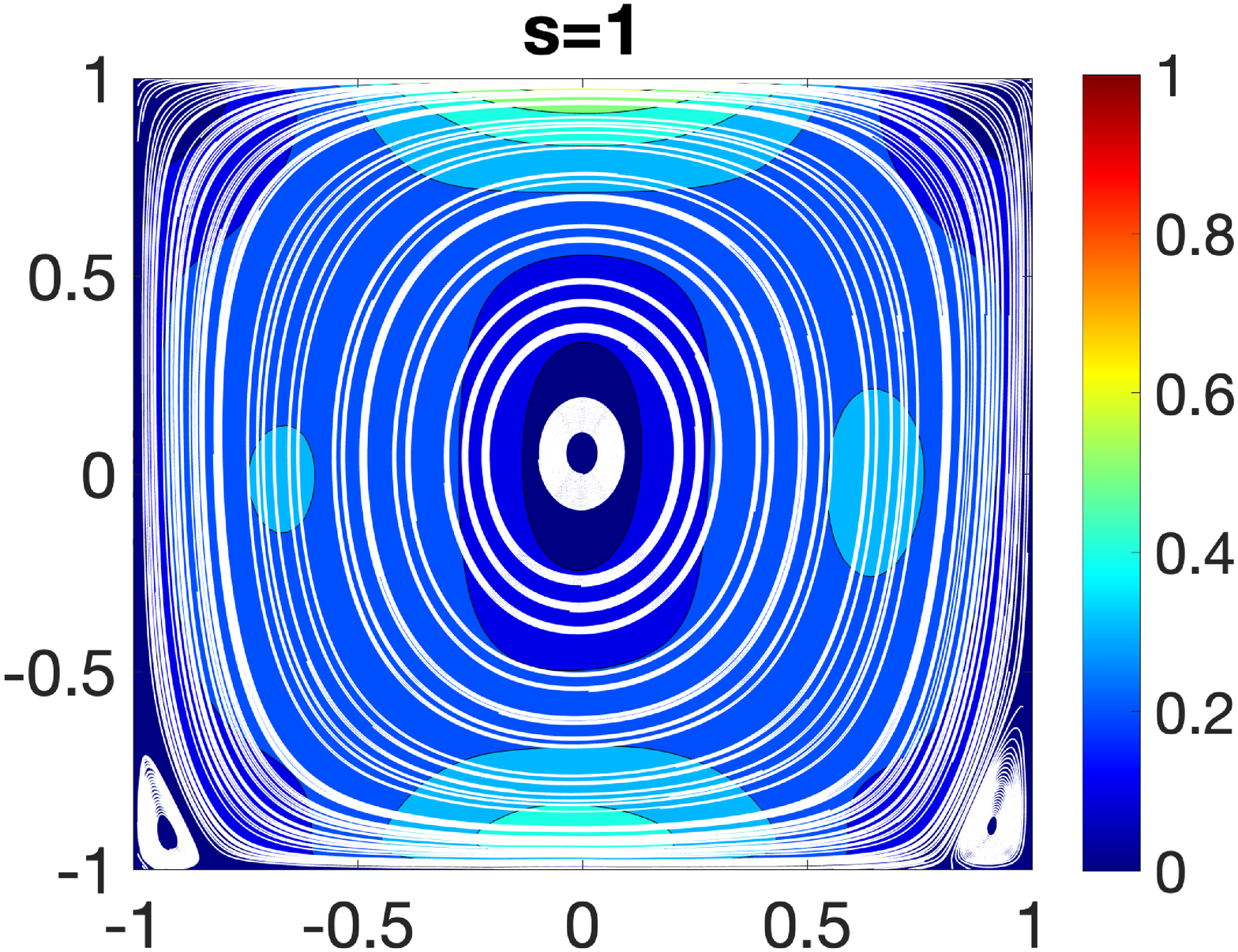} 
  \end{subfigure} 
  \caption{A lid-driven cavity problem. The velocity ensemble \textcolor{black}{average} solutions \textcolor{black}{for} $13636.36\le Re_j\le 16666.67$ at $T=600$ \textcolor{black}{for various coupling parameter $s$}.}
  \label{ldc_u_s_0_001_to_s_1} 
\end{figure}

\begin{figure}[ht] 
  \begin{subfigure}[b]{0.5\linewidth}
    \centering
    \includegraphics[width=0.8\textwidth, height=0.7\textwidth]{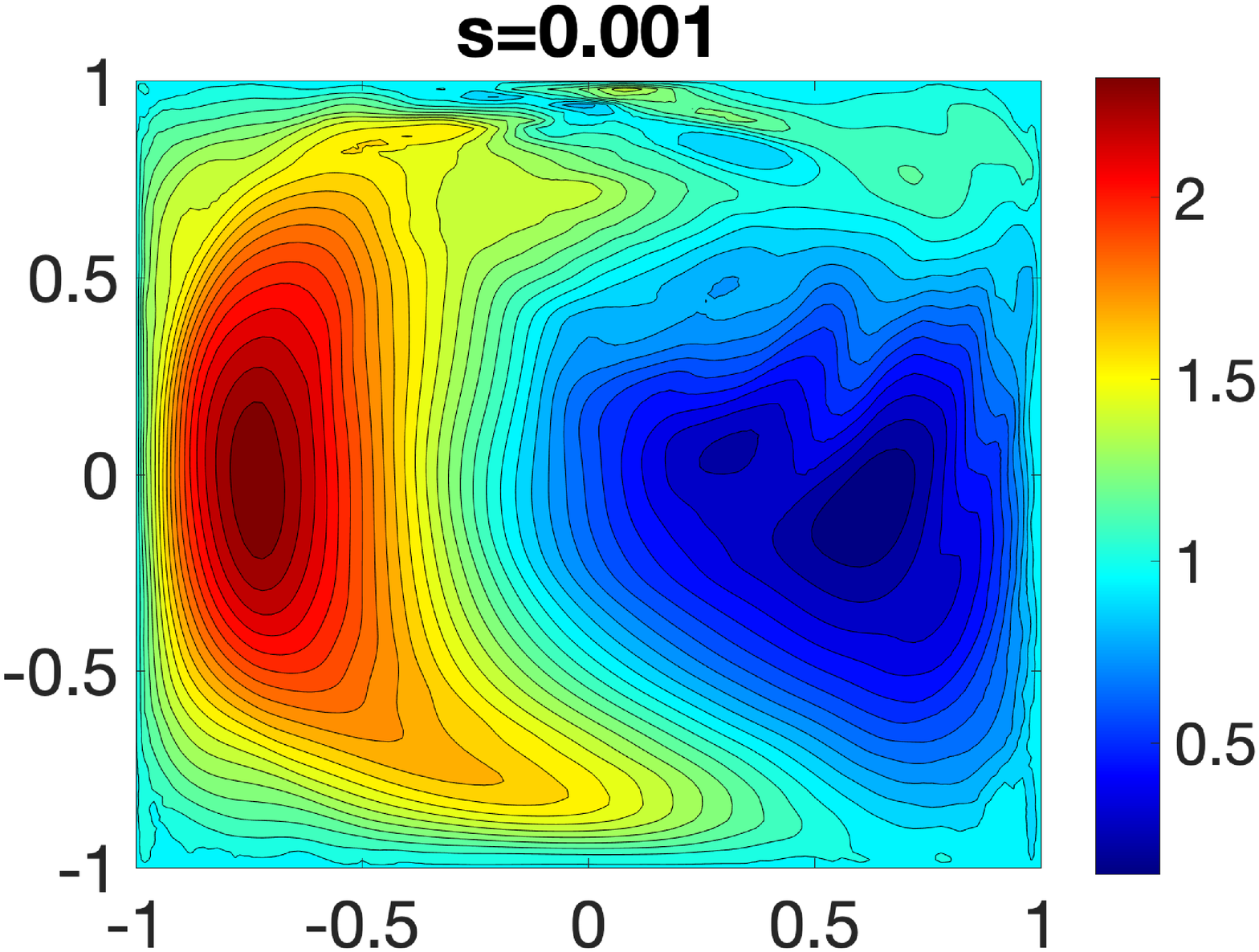}\vspace{-5mm}
  \end{subfigure}\hspace{-10ex}
  \begin{subfigure}[b]{0.5\linewidth}
    \centering
    \includegraphics[width=0.8\textwidth, height=0.7\textwidth]{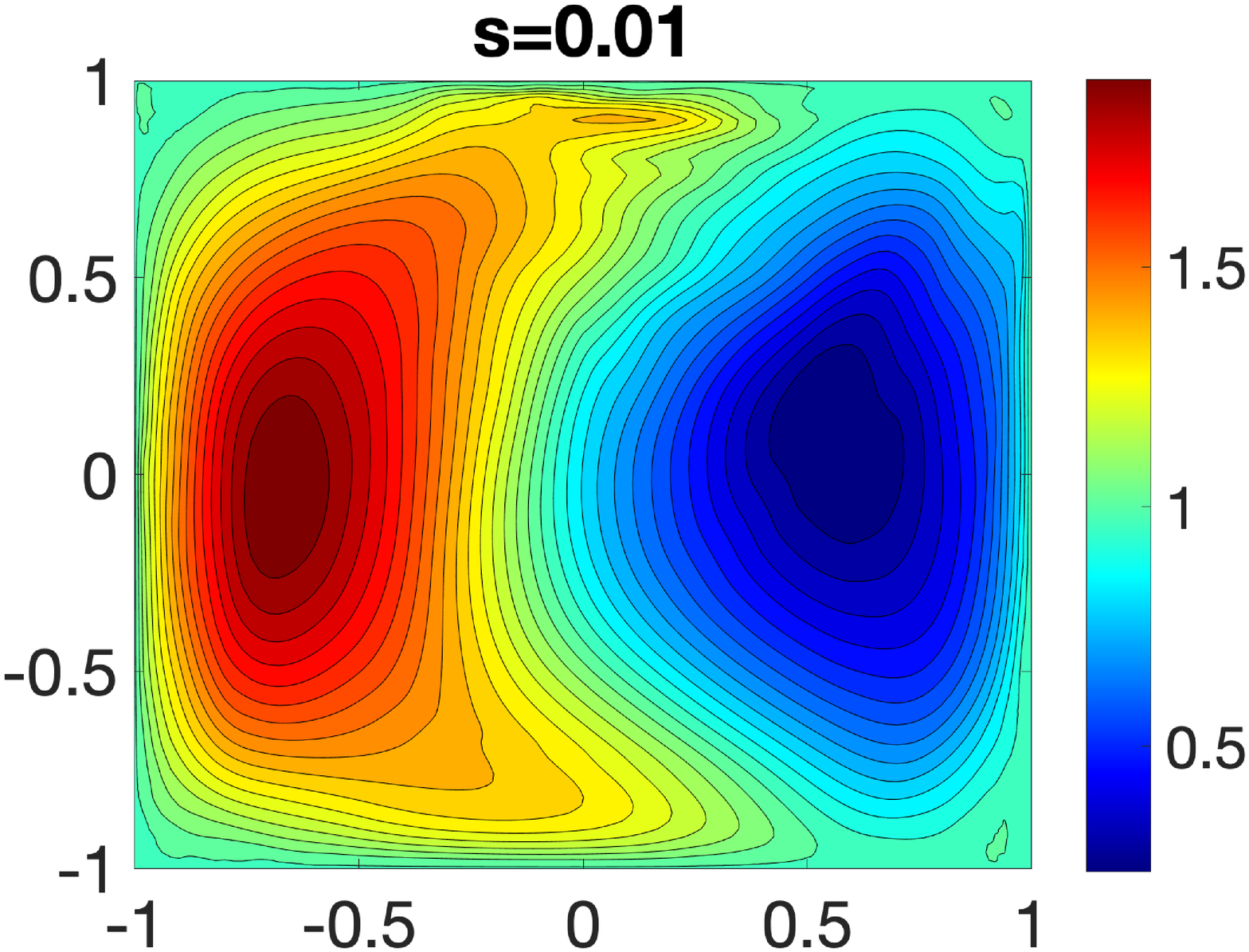}\vspace{-5mm}
  \end{subfigure} \\
  \begin{subfigure}[b]{0.5\linewidth}
    \centering
    \includegraphics[width=0.8\textwidth, height=0.7\textwidth]{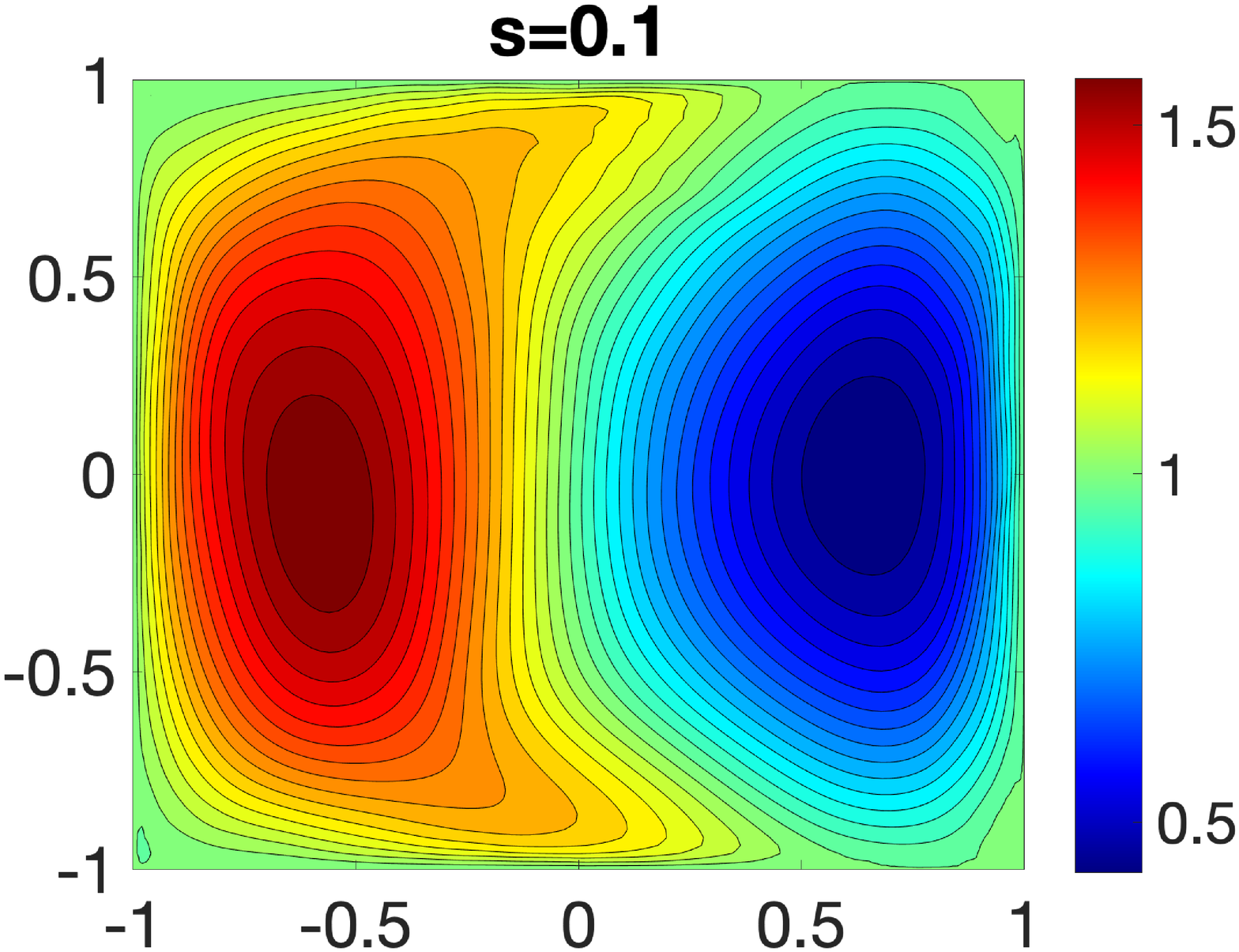} 
  \end{subfigure}\hspace{-10ex}
  \begin{subfigure}[b]{0.5\linewidth}
    \centering
    \includegraphics[width=0.8\textwidth, height=0.7\textwidth]{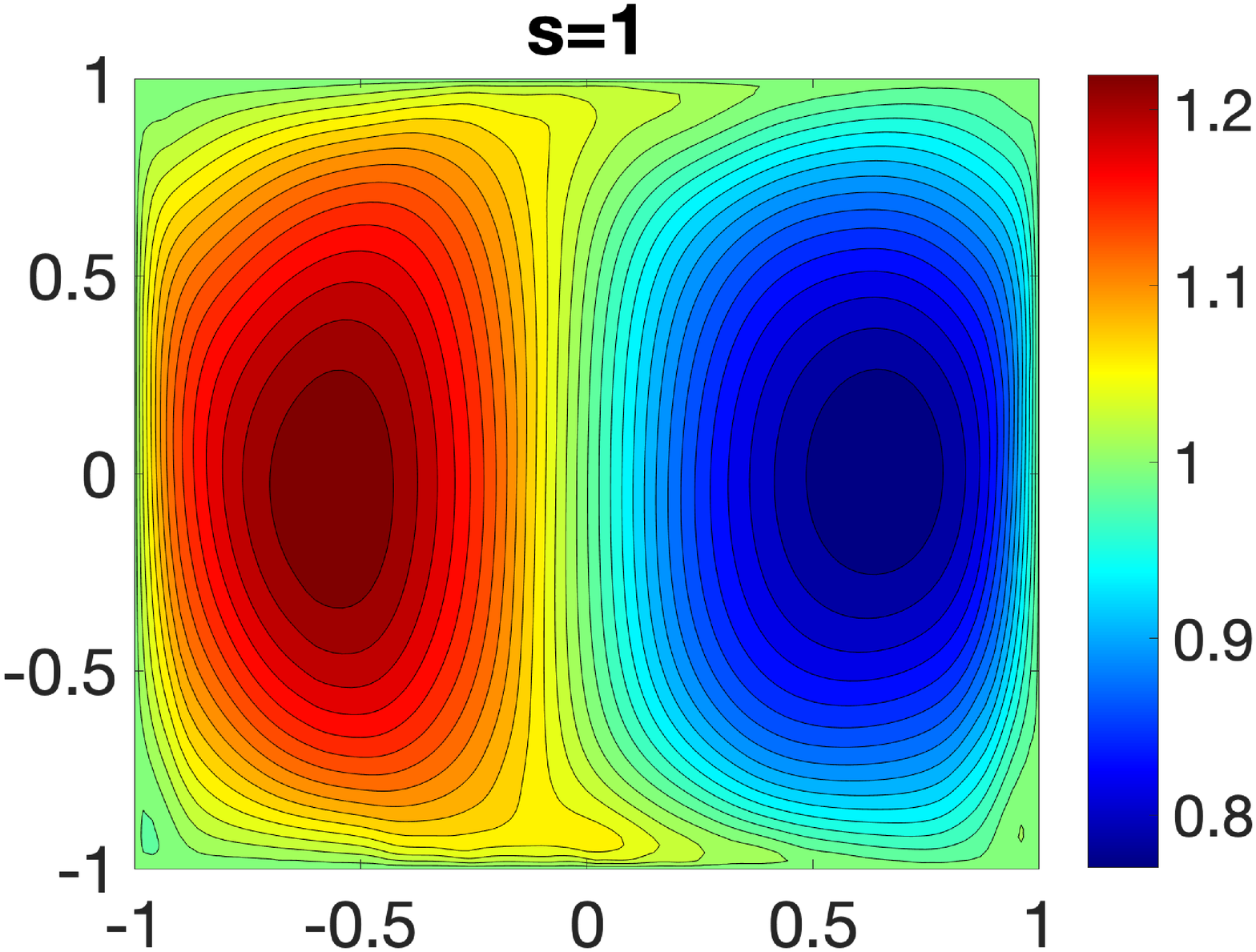} 
  \end{subfigure} 
  \caption{A lid-driven cavity problem. The magnetic field strength \textcolor{black}{ensemble average} solutions \textcolor{red}{for} $13636.36\le Re_j\le 16666.67$ at $T=600$ \textcolor{black}{for various coupling parameter $s$}.}
  \label{ldc_B_s_0_001_to_s_1} 
\end{figure}

\subsection{MHD channel flow over a step}

In this experiment, we consider a benchmark problem \cite{AKMR15,HMR17,layton2008numerical,MR17} in which the domain under consideration is a $30\times 10$ rectangular channel with a $1\times 1$ step at the bottom and five units away from the inlet. \textcolor{black}{The problem is not physically accurate as the domain is not convex, but we run the simulation anyway.} The following initial conditions are chosen:\vspace{-2ex}
\begin{align*}
    \bu_j^0=\textcolor{black}{\lp 1+\frac{(-1)^{j+1}\lceil j/2\rceil}{5}\epsilon\rp}\begin{pmatrix}\frac{y(10-y)}{25}\\ 0\end{pmatrix}\hspace{-0.8mm},\hspace{1mm}\text{and}\hspace{1mm} \bB_j^0=\begin{pmatrix}0\\0\end{pmatrix}.
\end{align*}
The \textcolor{black}{unperturbed ($\epsilon=0)$} initial velocity has a parabolic profile along the downstream direction and attains its maximum $\|\bu_{max}^0\|=1$ at $y=5$, \textcolor{black}{on the other hand, no magnetic field is assumed present initially.} \textcolor{black}{On the walls, we assign\begin{align*}
    \bu_j=\begin{pmatrix}0\\0\end{pmatrix}\hspace{-0.35mm},\hspace{1mm}\text{and}\hspace{1mm}\bB_j=\lp 1+\frac{(-1)^{j+1}\lceil j/2\rceil}{5}\epsilon\rp\begin{pmatrix}0\\1\end{pmatrix},
\end{align*}
for the velocity, and magnetic field, respectively, where the applied magnetic field is normal to the flow direction. As the inflow conditions, at the inlet we set \begin{align*}
    \bu_j=\lp 1+\frac{(-1)^{j+1}\lceil j/2\rceil}{5}\epsilon\rp\begin{pmatrix}\frac{y(10-y)}{25}\\0\end{pmatrix}\hspace{-0.35mm},\hspace{1mm}\text{and}\hspace{1mm}\bB_j=\lp 1+\frac{(-1)^{j+1}\lceil j/2\rceil}{5}\epsilon\rp\begin{pmatrix}0\\1\end{pmatrix}.
\end{align*}}\noindent For the outflow conditions, we extend the channel 10 units in the downstream direction and at the end we set outflow velocity and magnetic field equal to corresponding inflow conditions. \textcolor{black}{Thus, $\epsilon$ appears as a perturbation parameter in the initial and boundary conditions.} The initial and boundary conditions in the original variables are then transferred into the Els{\"{a}}sser variables. We generate a barycenter refined regular triangular unstructured mesh that provides a total of \textcolor{black}{$186,134$} velocity dofs, \textcolor{black}{$23,395$} pressure dofs, \textcolor{black}{$186,134$} magnetic field dofs, and \textcolor{black}{$23,395$} magnetic pressure dofs.

\textcolor{black}{For this computational} experiment, we consider a uniformly distributed random sample $\{(\nu_j,\nu_{m,j})\in [0.0009,0.0011]\times[0.009,0.011]\}$ with mean $(\Bar{\nu},\Bar{\nu}_m)=(0.001,0.01)$, \textcolor{black}{and no external source is considered in the system (i.e. $\bif_{j}=\bg_{j}=\textbf{0}$)}. We run the simulations using the Algorithm \ref{Algn1} with a fixed coupling parameter $s=0.001$, \textcolor{black}{varying the perturbation parameter $\epsilon$} and a fixed time-step size $\Delta t=0.05$  until an end time $T=40$. We plot the velocity and magnetic field ensemble average solutions in Figures \ref{channel-velocity}-\ref{channel-magnetic-field} \textcolor{black}{for various values of $\epsilon$. To make a comparison, we plot the solution for a single run simulation, which corresponds to the mean sample viscosities, and $\epsilon=0.0$ and present as `usual MHD' results. We observe that as $\epsilon\rightarrow 0$, the ensemble average solution converges to the usual MHD solution.} 

\begin{figure}[h!]
\begin{center}\vspace{-35mm}
            \includegraphics[width = 0.54\textwidth, height=0.4\textwidth,viewport=35 0 1400 890, clip]{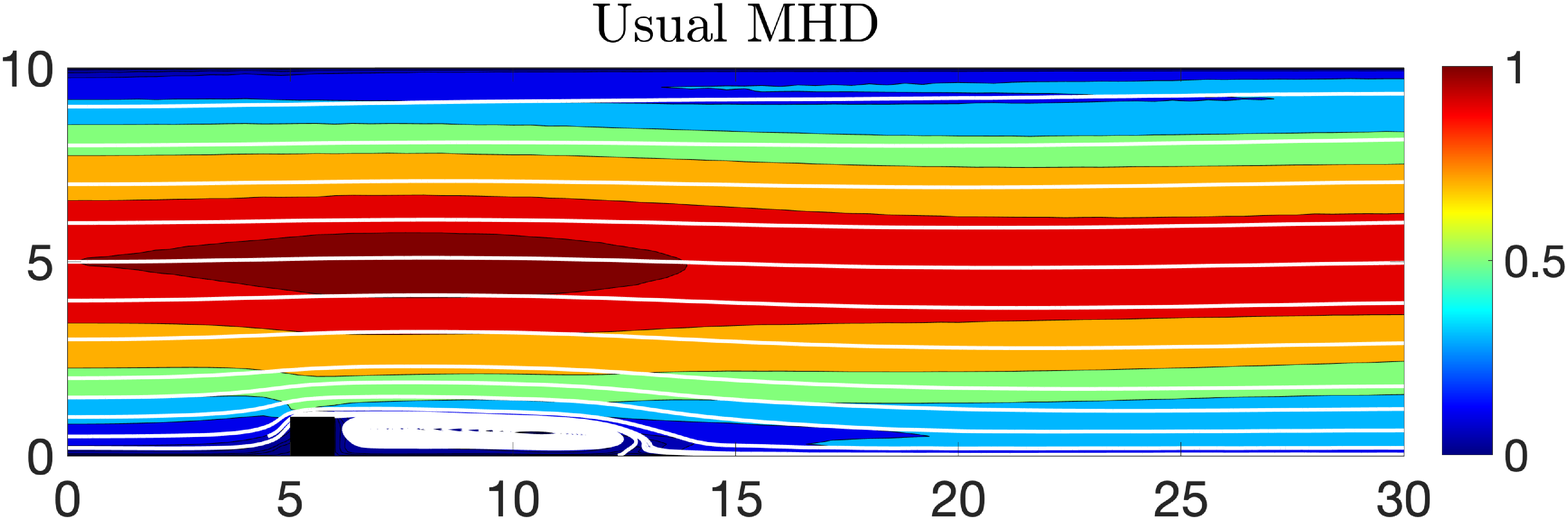}\vspace{-21.5ex}
            \includegraphics[width = 0.54\textwidth, height=0.4\textwidth,viewport=35 0 1400 890, clip]{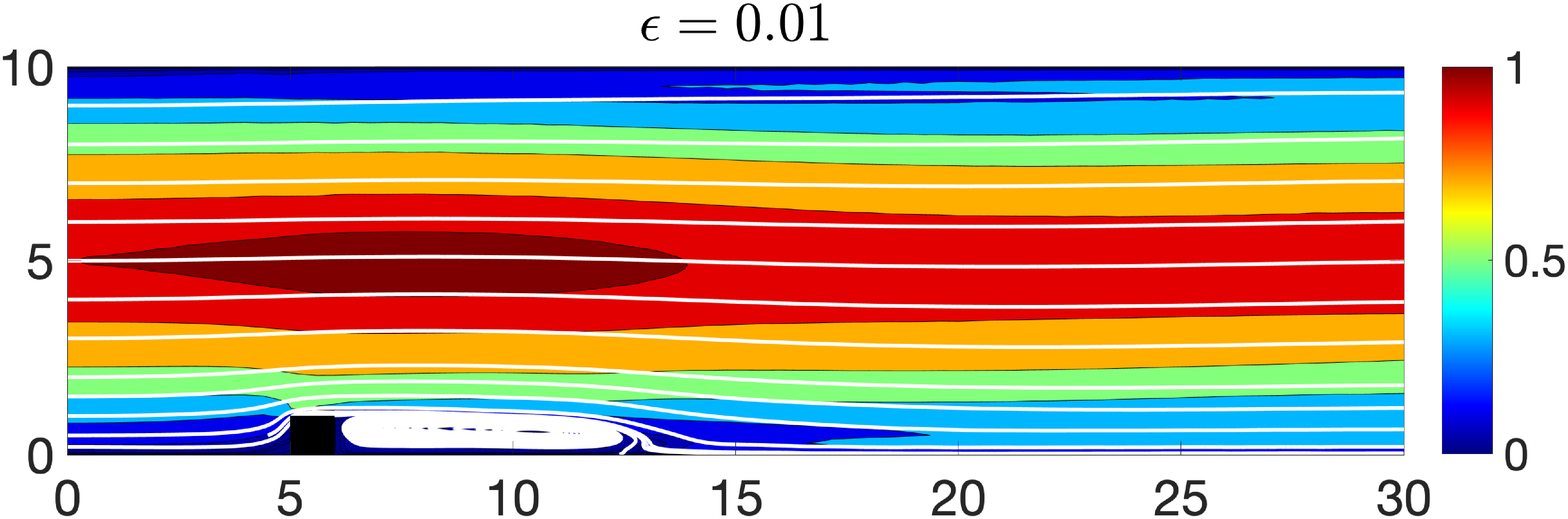}\vspace{-21.5ex}
            \includegraphics[width = 0.54\textwidth, height=0.4\textwidth,viewport=35 0 1400 890, clip]{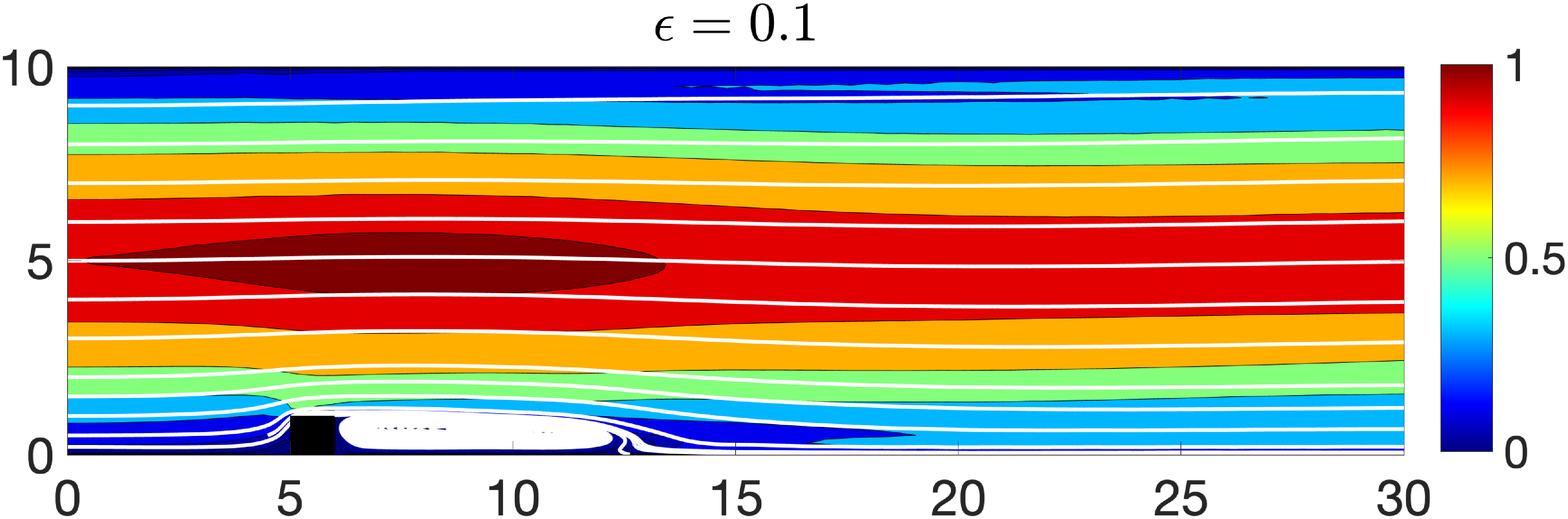}
	 	 	\caption{The velocity ensemble average solutions (shown as streamlines over speed contour) at $T=40$ for MHD channel flow over a step \textcolor{black}{for $s=0.001$}.}\label{channel-velocity}
\end{center}
\end{figure}

\begin{figure}[h!]
\begin{center}\vspace{-35mm}
            \includegraphics[width = 0.54\textwidth, height=0.4\textwidth,viewport=35 0 1400 890, clip]{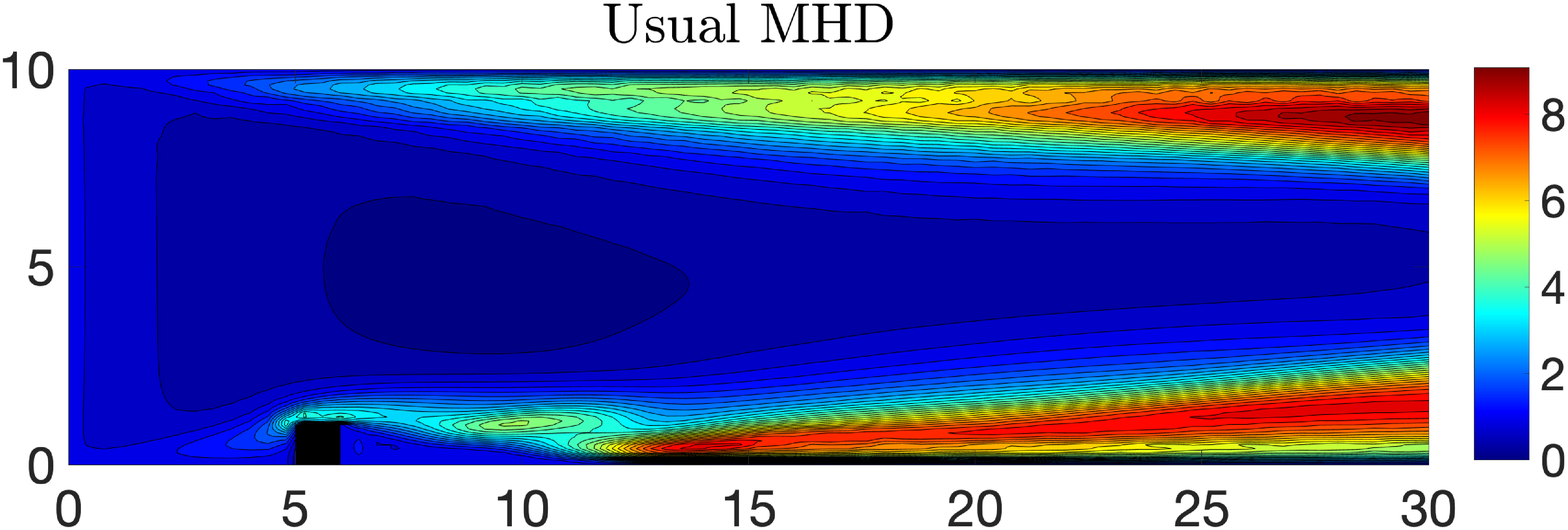}\vspace{-21.5ex}
            \includegraphics[width = 0.54\textwidth, height=0.4\textwidth,viewport=35 0 1400 890, clip]{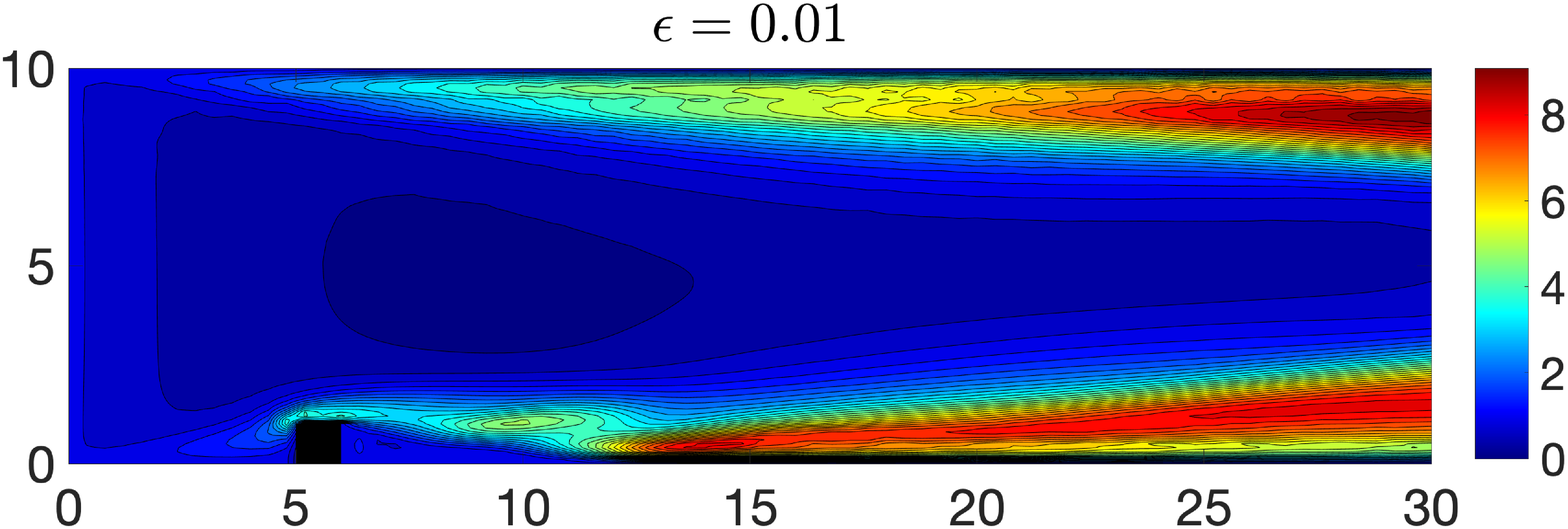}\vspace{-21.5ex}
            \includegraphics[width = 0.54\textwidth, height=0.4\textwidth,viewport=35 0 1400 890, clip]{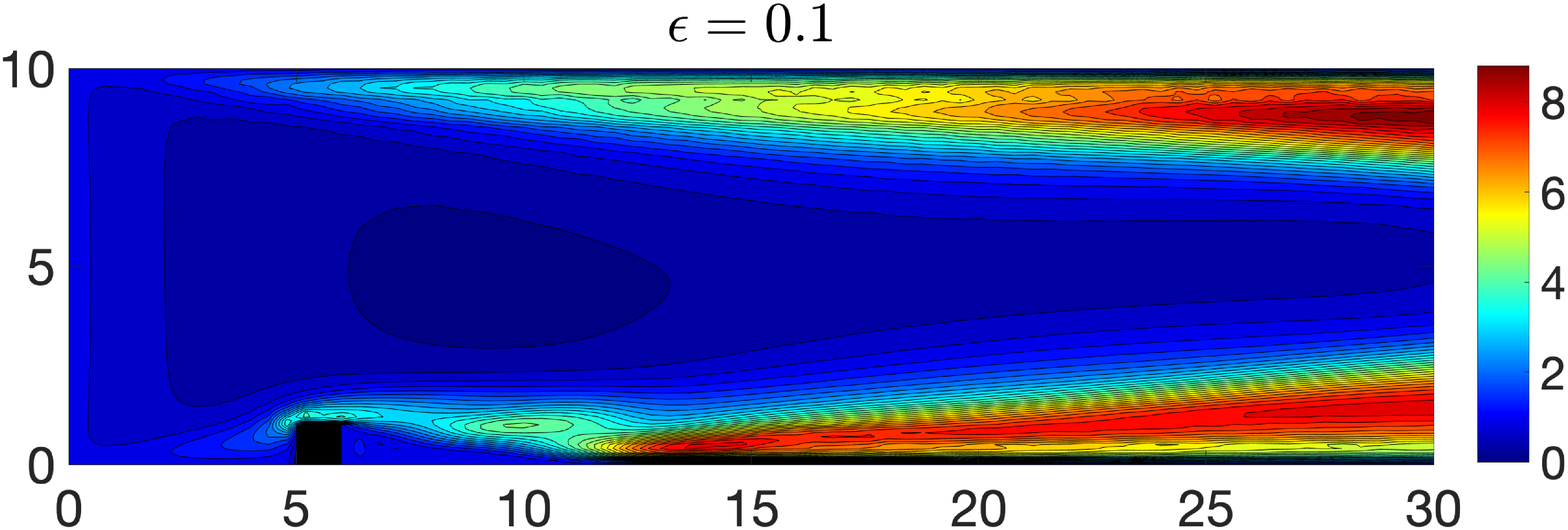}
	 	 	\caption{The magnetic field strength ensemble average solutions at $T=40$ for MHD channel flow over a step \textcolor{black}{for $s=0.001$}.}\label{channel-magnetic-field}
\end{center}
\end{figure}

\section{Conclusion and future works}\label{conclusion-future-works} In this paper,
we \textcolor{black}{have} proposed, analyzed, and tested an efficient ensemble algorithm, for a set of MHD simulations, which has the following features: (1) The linearized stable scheme is decoupled into two smaller identical subproblems, which can be solved at each time-step, simultaneously. This decoupling allows solving potentially much bigger problems with complex geometries than the MHD algorithms in terms of the primitive variables. (2) At each time-step, the system matrix remains common to all the ensemble members with different right-hand-side vectors. As a result, huge saving in storage and computational time, because the memory allocation for the system matrix, its assembly, factorization/preconditioners are needed only once per time-step. Moreover, the advantage of a block linear solver can be taken.  We assume the input data in the MHD flow involve uncertainties. Thus, each member \textcolor{black}{of the set is corresponding to a distinct combination of kinematic viscosity, magnetic diffusivity, initial conditions, boundary conditions, and body force.}

The unconditional stability of the scheme with respect to the time-step size is proven rigorously. The unconditional convergence is proven to be optimal in 2D, but in 3D the \textcolor{black}{theory} is suboptimal, due to the use of \textcolor{black}{the inverse inequality} in the analysis. \textcolor{black}{It is unclear if the suboptimal convergence is true, or if the 3D analysis is not sharp.} Numerical \textcolor{black}{experiments are performed} to verify the predicted convergence rates, \textcolor{black}{and energy stability of the scheme.} To observe the changes in the physical behavior as the coupling number increases \textcolor{black}{we have implemented the scheme} on a regularized lid-driven cavity with high Reynolds numbers. \textcolor{black}{We observe how solution changes as the deviation of noise in the initial and boundary conditions increases on} a channel flow past a rectangular step problems.

Our \textcolor{black}{future work} on MHD flow ensemble simulations will be based on N\'ed\'elec's edge element \cite{nedelec1980mixed} so that only the tangential component of the magnetic field becomes continuous across the inter-element boundaries. As a next step, this idea herein along with a penalty-projection \cite{AKMR15} ensemble algorithm for each subproblem can be \textcolor{black}{considered}. For high order  accurate uncertainty quantification along with the ideas proposed in \cite{Mohebujjaman2021High, wilson2015high} with deferred correction method will be the next research avenue. We will explore for more appropriate physical boundary conditions rather than the Dirichlet boundary conditions in Els\"asser variables. It has been shown in \cite{mohebujjaman2020scalability}, for Maxwell equations simulation, in presence of extremely different time scales, the iterative solver combination (FGMRES-GMRES) in conjunction with the parallel Auxiliary Space Maxwell (AMS) solver preconditioner outperforms over the direct solver. We plan to employ FGMRES-GMRES-AMS solver for solving complex problems using this scheme.

Parametric reduced order modeling (ROM) for MHD flow ensemble simulations following the data-driven approaches \cite{kaptanoglu2020physics,MRI18,xie2018data} and high order ROM differential filter in conjunction with evolve-filter-relax algorithm will also be the next research direction.

\bibliographystyle{plain}
\bibliography{High_order_MHD_ensemble}
\end{document}